\numberwithin{equation}{section}
\numberwithin{figure}{section}
\theoremstyle{plain}
\newtheorem{thm}{\protect\theoremname}[section]
  \theoremstyle{definition}
  \newtheorem{problem}[thm]{\protect\problemname}
  \theoremstyle{remark}
  \newtheorem{rem}[thm]{\protect\remarkname}
  \theoremstyle{plain}
  \newtheorem{lem}[thm]{\protect\lemmaname}
  \theoremstyle{plain}
  \newtheorem{cor}[thm]{\protect\corollaryname}
  \theoremstyle{plain}
  \newtheorem{prop}[thm]{\protect\propositionname}
  \newcounter{casectr}
  \newenvironment{caseenv}
  {\begin{list}{{\itshape\ \protect\casename} \arabic{casectr}.}{%
   \setlength{\leftmargin}{\labelwidth}
   \addtolength{\leftmargin}{\parskip}
   \setlength{\itemindent}{\listparindent}
   \setlength{\itemsep}{\medskipamount}
   \setlength{\topsep}{\itemsep}}
   \setcounter{casectr}{0}
   \usecounter{casectr}}
  {\end{list}}
  \providecommand{\casename}{Case}
  \providecommand{\corollaryname}{Corollary}
  \providecommand{\lemmaname}{Lemma}
  \providecommand{\problemname}{Problem}
  \providecommand{\propositionname}{Proposition}
  \providecommand{\remarkname}{Remark}
\providecommand{\theoremname}{Theorem}
\begin{document}

\title{Density of values of linear maps on quadratic surfaces.}

\author{Oliver Sargent}

\address{\textsc{Department Of Mathematics, University Walk, Bristol, BS8
1TW, UK.}}

\address{\texttt{Oliver.Sargent@bris.ac.uk}}
\begin{abstract}
In this paper we investigate the distribution of the set of values
of a linear map at integer points on a quadratic surface. In particular
we show that this set is dense in the range of the linear map subject
to certain algebraic conditions on the linear map and the quadratic
form that defines the surface. The proof uses Ratner's Theorem on
orbit closures of unipotent subgroups acting on homogeneous spaces. 
\end{abstract}
\maketitle

\section{Introduction }

We are motivated by the following general problem. 
\begin{problem}
\label{problem 1}If $X$ is some rational surface in $\mathbb{R}^{d}$
and $P:X\rightarrow\mathbb{R}^{s}$ is a polynomial map, then what
can one say about the distribution of the set $\left\{ P\left(x\right):x\in X\cap\mathbb{Z}^{d}\right\} $
in $\mathbb{R}^{s}$? 
\end{problem}
One expects to be able to answer Problem \ref{problem 1} by showing
that the set $\left\{ P\left(x\right):x\in X\cap\mathbb{Z}^{d}\right\} $
is dense in $\mathbb{R}^{s}$ under certain dimension and rationality
conditions imposed on $P$. In full generality problem \ref{problem 1}
is unapproachable via available techniques and what is known is limited
to special cases. For instance, when $X=\mathbb{R}^{d}$ Problem \ref{problem 1}
has been considered for $P$ a linear or quadratic map, or combinations
of both. The case when $P$ is linear is classical, and is treated
by Theorem 1 on page 64 of \cite{MR0349591}. When $P$ is quadratic,
Problem \ref{problem 1} is known as the Oppenheim conjecture, density
was first established by the work of G. Margulis in \cite{MR993328}
and subsequently refined by S.G. Dani and G. Margulis in \cite{MR1016271}.
Moreover, in this situation qualitative results have also been established;
initially by S.G. Dani and G. Margulis in \cite{MR1237827} and later
by A. Eskin, S. Mozes and G. Margulis in \cite{MR1609447}. For $P$,
a pair, consisting of a quadratic and linear form, Problem \ref{problem 1}
has been considered by A. Gorodnik in \cite{MR2067128}. The case
when $P$ consists of a system of many linear forms and a quadratic
form has been considered by S.G. Dani in \cite{MR2366232}. A. Gorodnik
also considered the case when $P$ consists of a system of quadratic
forms in \cite{MR2054841}. To the authors knowledge the case when
$X\neq\mathbb{R}^{d}$ has not been considered. The main result of
this paper deals with a case of Problem \ref{problem 1} when $X$
is a quadratic surface and is stated below. 
\begin{thm}
\label{conj:multilinear}Suppose $Q$ is a quadratic form on $\mathbb{R}^{d}$
such that $Q$ is non-degenerate, indefinite with rational coefficients.
For $a\in\mathbb{Q}$ define $X_{\mathbb{Z}}=\left\{ x\in\mathbb{\mathbb{Z}}^{d}:Q\left(x\right)=a\right\} $,
suppose that $\left|X_{\mathbb{Z}}\right|=\infty$. Let $M=\left(L_{1},\ldots,L_{s}\right):\mathbb{R}^{d}\rightarrow\mathbb{R}^{s}$
be a linear map such that:
\begin{enumerate}
\item \label{enu:condition 1}The following inequalities hold, $d>2s$ and
$\textrm{rank}\left(Q|_{\ker\left(M\right)}\right)>2$. 
\item \label{enu:condition 2}The quadratic form $Q|_{\ker\left(M\right)}$
is indefinite.
\item \label{enu:condition 3}For all $\alpha\in\mathbb{R}^{s}\setminus\left\{ 0\right\} $,
$\alpha_{1}L_{1}+\dots+\alpha_{s}L_{s}$ is non rational. 
\end{enumerate}
Then $\overline{M\left(X_{\mathbb{Z}}\right)}=\mathbb{R}^{s}$. 
\end{thm}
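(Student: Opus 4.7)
The plan is to deduce the density of $M(X_{\mathbb{Z}})$ from an orbit closure statement on a homogeneous space, via Ratner's theorem. Set $G=\mathrm{SO}(Q)^{\circ}$ and $\Gamma=G\cap\mathrm{SL}_{d}(\mathbb{Z})$; by Borel--Harish-Chandra, $\Gamma$ is a lattice in the semisimple Lie group $G$. Fix some $x_{0}\in X_{\mathbb{Z}}$, which exists because $|X_{\mathbb{Z}}|=\infty$. Since $\Gamma x_{0}\subseteq X_{\mathbb{Z}}$, it suffices to prove $\overline{M(\Gamma x_{0})}=\mathbb{R}^{s}$.

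The central construction is the following. Let $W=\ker M$ and let $W^{\perp_{Q}}$ denote its $Q$-orthogonal complement. Conditions (1) and (2) guarantee that $Q|_{W}$ is indefinite of rank at least $3$, so the subgroup
\[
H:=\{g\in G:g|_{W^{\perp_{Q}}}=\mathrm{id}\}^{\circ}\cong\mathrm{SO}(Q|_{W})^{\circ}
\]
is semisimple, noncompact, and generated by unipotent one-parameter subgroups. Crucially, $M$ is $H$-invariant: for every $h\in H$ and $v\in\mathbb{R}^{d}$ one has $hv-v\in W=\ker M$, whence $M(hv)=M(v)$; consequently $M(h\gamma x_{0})=M(\gamma x_{0})$ for all $h\in H$ and $\gamma\in\Gamma$.

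I apply Ratner's orbit closure theorem to the $H$-action on $G/\Gamma$: there is a closed connected subgroup $L$ with $H\subseteq L\subseteq G$ such that $\overline{H\cdot e\Gamma}=L\cdot e\Gamma$, and by the refinement of Dani--Margulis and Shah in the arithmetic setting, $L$ is defined over $\mathbb{Q}$. Granting $L=G$, the set $H\Gamma$ is dense in $G$. Given any $\alpha\in\mathbb{R}^{s}$, the indefiniteness of $Q|_{W}$ makes $M$ surjective from $X_{\mathbb{R}}$ onto $\mathbb{R}^{s}$ (each slice $M^{-1}(\alpha)\cap X_{\mathbb{R}}$ is a nonempty shifted quadric in $W$), so one picks $y=gx_{0}\in X_{\mathbb{R}}$ with $M(y)=\alpha$ and approximates $g$ by some $h\gamma\in H\Gamma$. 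The $H$-invariance of $M$ then yields
\[
M(\gamma x_{0})=M(h\gamma x_{0})\approx M(gx_{0})=\alpha,
\]
which is the required density.

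The heart of the argument, and the main expected obstacle, is to establish $L=G$; this is where condition (3) enters essentially. The Lie algebra $\mathfrak{l}$ is $\mathbb{Q}$-defined and contains $\mathfrak{h}=\mathfrak{so}(Q|_{W})$, and is therefore $H$-invariant under the adjoint action. Decomposing $\mathfrak{g}$ as an $H$-module via the $Q$-orthogonal splitting $\mathbb{R}^{d}=W\oplus W^{\perp_{Q}}$ gives
\[
\mathfrak{g}=\mathfrak{so}(Q|_{W})\oplus\mathrm{Hom}(W^{\perp_{Q}},W)\oplus\mathfrak{so}(Q|_{W^{\perp_{Q}}}),
\]
where the middle summand is $s$ copies of the standard $H$-representation on $W$ and the last is $H$-trivial. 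Any $H$-submodule of $\mathfrak{g}$ containing $\mathfrak{h}$ is consequently parametrised by a subspace of $W^{\perp_{Q}}$ (controlling the off-diagonal mixing) together with a subspace of $\mathfrak{so}(Q|_{W^{\perp_{Q}}})$. If $\mathfrak{l}$ were proper, this data would, by invariant-theoretic considerations, produce a proper $\mathbb{Q}$-subspace of $\mathbb{R}^{d}$ containing $W$; but any nonzero rational linear form vanishing on such a subspace would also vanish on $W$ and therefore lie in $\mathrm{span}_{\mathbb{R}}(L_{1},\dots,L_{s})$, contradicting condition (3). This rigidity---pitting the rationality of $L$ against the total irrationality of $W$ encoded by (3)---forces $L=G$ and concludes the proof.
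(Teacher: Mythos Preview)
Your overall strategy---use Ratner's theorem on $G/\Gamma$ with an $M$-stabilising subgroup $H$, then force the intermediate group $L$ to equal $G$ via the rationality of $L$ against condition~(3)---is exactly the paper's strategy. However, there is a genuine gap in your execution of the step $L=G$.

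The central problem is that you assume the $Q$-orthogonal splitting $\mathbb{R}^{d}=W\oplus W^{\perp_{Q}}$ with $W=\ker M$. This holds only when $Q|_{W}$ is \emph{nondegenerate}, i.e.\ when $\operatorname{rank}(Q|_{\ker M})=d-s$; the hypotheses only demand $\operatorname{rank}(Q|_{\ker M})>2$, so the radical of $Q|_{W}$ can have positive dimension $m=d-s-\operatorname{rank}(Q|_{\ker M})$. When $m>0$ one has $W\cap W^{\perp_{Q}}\neq 0$ and $W+W^{\perp_{Q}}\subsetneq\mathbb{R}^{d}$, so your three-block decomposition of $\mathfrak{g}$ is false, the isomorphism $H\cong\mathrm{SO}(Q|_{W})^{\circ}$ fails (the pointwise stabiliser of $W^{\perp_{Q}}$ in $G$ acquires a nontrivial unipotent radical and is not semisimple), and the description of $H$-submodules of $\mathfrak{g}$ that you rely on collapses. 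The paper handles this degenerate case by working in a canonical form that explicitly tracks $m$, defining $H^{*}=UD$ with a unipotent piece $U$ coming from the radical, and then proving the implication ``no small invariant subspaces $\Rightarrow F=G$'' by an inductive enlargement argument (their Lemma~4.2 / Proposition~4.1) that is substantially more intricate than the clean $H$-module picture you sketch.

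Even in the nondegenerate case $m=0$, your phrase ``by invariant-theoretic considerations, produce a proper $\mathbb{Q}$-subspace of $\mathbb{R}^{d}$ containing $W$'' hides a nontrivial step. An $L$-invariant subspace is not automatically $\mathbb{Q}$-defined just because $L$ is. What actually works (and what the paper does in its Lemma~3.7) is a Galois-descent argument: one considers the \emph{maximal} $L$-invariant subspace of $\mathcal{L}^{H}=\langle L_{1},\dots,L_{s}\rangle$, shows that each Galois conjugate is again $L$-invariant and---using $d>2s$ together with the classification of $H$-invariant subspaces---still lies inside $\mathcal{L}^{H}$, hence equals the maximal one; this forces it to be $\mathbb{Q}$-defined and therefore zero by condition~(3). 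Your $V'\subsetneq W^{\perp_{Q}}$ does yield the nonzero $L$-invariant subspace $(W\oplus V')^{\circ}\subseteq\mathcal{L}^{H}$ needed to feed into this, so with $m=0$ your outline can be completed; but for $m>0$ the missing structural analysis is the real content of the proof.
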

The key feature of Theorem \ref{conj:multilinear}, that is exploited
in its proof, is that $X_{\mathbb{R}}$ has a large group of symmetries.
Moreover, there is a large subgroup, $H$, of this group that stabilises
$M$ and is generated by one parameter unipotent subgroups. This means
that the problem can be studied from a dynamical systems point of
view. This is done via the following Theorem of M. Ratner found in
\cite{MR1262705}. 
\begin{thm}[Ratner's Theorem%
\footnote{Theorem \ref{thm:(Ranghunthans-togological-conjec} is also known
as Ratner's orbit closure Theorem or Ranghunathan's topological conjecture
after it was conjectured by him in the 70's. It can be seen as a vast
generalisation of a Theorem of Hedlund concerning horocyclic flows
in $SL_{2}\left(\mathbb{R}\right)/SL_{2}\left(\mathbb{Z}\right)$.
It should also be noted that a special case of Theorem \ref{thm:(Ranghunthans-togological-conjec}
was proved by S.G. Dani and G. Margulis in \cite{MR1032925} after
partial results in this direction were obtained for use in the proofs
of the Oppenheim conjecture and its subsequent refinements.%
}]
\label{thm:(Ranghunthans-togological-conjec}Let $G$ be a connected
Lie group and $H$ a subgroup of $G$ generated by one parameter unipotent
subgroups. Then, given a lattice $\Gamma$ of $G$ and any $x\in G/\Gamma$,
the closure of the orbit $Hx$ is equal to the orbit of a closed connected
subgroup $F$, such that $H\leq F\leq G$.
\end{thm}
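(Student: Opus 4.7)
The standard route is to deduce the orbit-closure statement from Ratner's measure classification theorem, which asserts that every $H$-invariant ergodic probability measure on $G/\Gamma$ is the homogeneous measure on a closed orbit $Fy$ for some closed connected subgroup $F$ with $H\leq F\leq G$. Assuming this measure rigidity, I would handle orbit closures as follows. Fix $x\in G/\Gamma$ and choose a one-parameter unipotent subgroup $\{u_t\}\subset H$ whose orbits generate $H$. By a Dani--Margulis non-divergence estimate, the family of empirical measures $\frac{1}{T}\int_{0}^{T}\delta_{u_t x}\,dt$ is tight, so any weak-$*$ subsequential limit $\mu$ is a $\{u_t\}$-invariant probability measure supported on $\overline{Hx}$. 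Decomposing $\mu$ into ergodic components and applying measure rigidity to each, one finds that $\mu$-almost every point sits on a closed orbit of some $F$; a countability argument (only countably many closed subgroups of $G$ arise this way, once $\Gamma$ is fixed) together with a Birkhoff-type genericity statement then identifies a single $F$ such that $\overline{Hx}=Fx$ and this orbit is closed.

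The heart of the matter is proving the measure classification, and the plan is to exploit the polynomial divergence of unipotent trajectories. Given an $H$-invariant ergodic $\mu$ whose stabiliser $\Lambda(\mu)=\{g\in G:g_{*}\mu=\mu\}$ strictly contains $H$, the goal is to enlarge $\Lambda(\mu)$ by producing extra invariance. For two generic points $x$ and $y=gx$ with $g$ small but $g\notin\Lambda(\mu)$, one compares $u_t x$ and $u_t y$: because $u_t g u_{-t}$ depends polynomially on $t$, the relative displacement drifts in a prescribed direction lying in the normaliser of $\{u_t\}$ (the shearing or $\mathcal{R}$-property). Choosing a stopping time $T=T(g)$ at which the displacement is of order one, combining this with a Lusin-type regularity of $\mu$ on a set of large measure, and averaging via the Birkhoff ergodic theorem, one converts almost-invariance into honest invariance of $\mu$ under a new one-parameter unipotent subgroup not previously contained in $\Lambda(\mu)$.

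Iterating this enlargement procedure, $\Lambda(\mu)$ grows until the process terminates. One then shows that $\mu$ is supported on a single orbit $Fx$ of the limiting group $F=\Lambda(\mu)$: invariance of a probability measure forces the stabiliser of a generic point to be a lattice in $F$, which in turn forces $Fx$ to be closed. The reduction from $H$ generated by unipotents to a single one-parameter unipotent subgroup is carried out by induction on the dimension of $H$, using at each step the fact that $F$ must contain the generating unipotents of $H$. Combined with the genericity argument from the first paragraph, this yields exactly the statement of Theorem \ref{thm:(Ranghunthans-togological-conjec}.

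The principal obstacle is the shearing step: identifying precisely which transverse direction the polynomial drift $u_t g u_{-t}$ concentrates in, and converting an approximate invariance statement into honest invariance under a one-parameter subgroup. This requires uniform estimates that survive the passage to Birkhoff-generic points, careful control over the rate at which the two trajectories can leave a Lusin set on which $\mu$ behaves well, and, in the semisimple setting, a case analysis driven by the root-space decomposition of $G$ relative to $\{u_t\}$. I would not attempt to reproduce the detailed entropy and polynomial-divergence estimates here; for the full argument I would refer to Ratner's original series of papers, and use only the statement of Theorem \ref{thm:(Ranghunthans-togological-conjec} in the sequel.
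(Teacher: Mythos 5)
The paper does not prove this statement at all: it is Ratner's orbit closure theorem, imported verbatim from \cite{MR1262705} and used as a black box, so there is no internal proof to compare against. Your outline is an accurate description of the standard architecture of the actual proof --- measure classification via polynomial divergence and shearing, Dani--Margulis non-divergence to get tightness of the empirical measures, the countability lemma for the class of subgroups $F$ with $F\cap\Gamma$ a lattice and $F\Gamma$ closed, and a genericity/linearisation argument to pass from almost-every ergodic component being homogeneous to the orbit closure being a single homogeneous set. In that sense it is ``the same approach'' as the paper, namely deferring the substance to Ratner's papers.

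Two places where your sketch glosses over real work, should anyone try to expand it: first, $H$ is generated by \emph{several} one-parameter unipotent subgroups, and the closure of a single $\{u_t\}$-orbit can be strictly smaller than $\overline{Hx}$, so the reduction to one unipotent flow plus the inductive step ensuring the limiting $F$ contains all the generators of $H$ is not a formality; second, the passage from ``$\mu$-a.e.\ point lies on a closed $F$-orbit'' to ``$\overline{Hx}=Fx$ for the specific base point $x$'' requires the uniform equidistribution/linearisation machinery (Dani--Margulis, Mozes--Shah), not just Birkhoff genericity, since $x$ itself need not be generic for any invariant measure a priori. Neither point is an error in your outline --- you flag both --- but they are precisely where the difficulty is concentrated, and for the purposes of this paper the correct move is exactly the one both you and the author make: cite the theorem and do not reprove it.
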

In order to make use of Ratner's Theorem in this context, we study
the action of $H$ on a suitable homogeneous space, $G/\Gamma$. Then
one must show that under the rationality constraints imposed by condition
\ref{enu:condition 3}, the subgroup $F$ is sufficiently large. This
is the strategy that enables us to prove Theorem \ref{conj:multilinear}. 
\begin{rem}
There is no reason to expect that the inequality $2s<d$ from condition
\ref{enu:condition 1} is necessary however it is needed in the proof
of Lemma \ref{lem:no invariant subspaces}. The inequality $\textrm{rank}\left(Q|_{\ker\left(M\right)}\right)>2$
is analogous to the condition that $d>2$ in the Oppenheim conjecture
and is probably necessary, although no counterexample has been found. 
\end{rem}

\begin{rem}
Condition \ref{enu:condition 2} is possibly stronger than is strictly
necessary, however it is a natural condition and comparable with conditions
imposed in \cite{MR2067128}. It implies the necessary condition that
the set $X_{\mathbb{R}}\cap\left\{ x\in\mathbb{R}^{d}:M\left(x\right)=b\right\} $,
for some $b\in\mathbb{R}^{s}$ is non compact. To see that this condition
is necessary, suppose $X_{\mathbb{R}}\cap\left\{ x\in\mathbb{R}^{d}:M\left(x\right)=b\right\} $
is compact. Hence $X_{\mathbb{R}}\cap\left\{ x\in\mathbb{R}^{d}:\left|M\left(x\right)-b\right|\leq\epsilon\right\} $
is also compact and therefore contains only finitely many integer
points. Hence if $b\notin M\left(\mathbb{Z}^{d}\right)$, we can make
$\epsilon$ small enough so that $X_{\mathbb{R}}\cap\left\{ x\in\mathbb{R}^{d}:\left|M\left(x\right)-b\right|\leq\epsilon\right\} $
contains no integer points, but then there exists an open set $B_{\epsilon}\left(b\right)\subset\mathbb{R}^{s}$
such that there is no $x\in X_{\mathbb{Z}}$ with $M\left(x\right)\in B_{\epsilon}\left(b\right)$.
\end{rem}

\begin{rem}
Condition \ref{enu:condition 3} is necessary since otherwise $M\left(\mathbb{Z}^{d}\right)$
would not even be dense in $\mathbb{R}^{s}$. 
\end{rem}

\section{Set up}

\subsection{A canonical form for the system.}

Given a general pair $\left(\mathcal{Q},\mathcal{M}\right)$ consisting
of a non degenerate quadratic form and a linear map on $\mathbb{R}^{d}$,
it is possible to use linear transformations to transform $\left(\mathcal{Q},\mathcal{M}\right)$
into something more manageable. For two pairs $\left(Q_{1},M_{1}\right)$
and $\left(Q_{2},M_{2}\right)$ we say $\left(Q_{1},M_{1}\right)\sim\left(Q_{2},M_{2}\right)$
if and only if there exist $g_{d}\in GL_{d}\left(\mathbb{R}\right)$
and $g_{s}\in GL_{s}\left(\mathbb{R}\right)$ such that $\left(Q_{1}\left(x\right),M_{1}\left(x\right)\right)=\left(Q_{2}\left(g_{d}x\right),g_{s}M_{2}\left(g_{d}x\right)\right)$
for all $x\in\mathbb{R}^{d}$. 

The following result, adapted from \cite{MR2067128}, is reproduced
below since it will be used to establish a more general form. 
\begin{lem}
\label{lem:prop 2 canoical 1 d}Every pair $\left(Q,L\right)$, where
$Q$ is a non degenerate quadratic form on $\mathbb{R}^{d}$ with
signature $\left(p,q\right)$, and $L$ is a non zero linear form
on $\mathbb{R}^{d}$, is equivalent to one and only one of the following
pairs:
\begin{enumerate}
\item If $\textrm{rank}\left(Q|_{\ker\left(L\right)}\right)=d-1$, then
either

\begin{enumerate}
\item $\left(Q,L\right)\sim\left(\sum_{i=1}^{p}x_{i}^{2}-\sum_{i=p+1}^{d}x_{i}^{2},x_{1}\right)$
\item $\left(Q,L\right)\sim\left(\sum_{i=1}^{p}x_{i}^{2}-\sum_{i=p+1}^{d}x_{i}^{2},x_{d}\right)$.
\end{enumerate}
\item If $\textrm{rank}\left(Q|_{\ker\left(L\right)}\right)=d-2$, then
$\left(Q,L\right)\sim\left(2x_{1}x_{d}+\sum_{i=2}^{p}x_{i}^{2}-\sum_{i=p+1}^{d-1}x_{i}^{2},x_{1}\right)$. 
\end{enumerate}
\end{lem}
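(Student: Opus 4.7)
My plan is a case analysis on the invariant $r = \textrm{rank}(Q|_{\ker L})$. Since $L \neq 0$, $\ker L$ is a hyperplane, and non-degeneracy of $Q$ forces $\dim(\ker L)^{\perp_{Q}} = 1$, so the radical $\ker L \cap (\ker L)^{\perp_{Q}}$ of $Q|_{\ker L}$ has dimension $0$ or $1$, giving $r = d-1$ or $r = d-2$ respectively. These two possibilities correspond to cases (1) and (2) of the statement. In either case I would produce an explicit basis in which $(Q,L)$ takes the stated form; uniqueness then follows because the rank, and in case (1) the signature, of $Q|_{\ker L}$ is invariant under the equivalence $\sim$.

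In case (1), $Q|_{\ker L}$ is non-degenerate, so $\mathbb{R}^{d} = \ker L \oplus (\ker L)^{\perp_{Q}}$ splits $Q$-orthogonally. The $1$-dimensional complement has signature either $(1,0)$ or $(0,1)$, forcing $Q|_{\ker L}$ to have signature $(p-1,q)$ or $(p,q-1)$ respectively. Choosing a $Q$-orthogonal basis adapted to this splitting and normalising the squares to $\pm 1$ yields either canonical form (1a) or (1b): in the first subcase the positive complement vector becomes $e_{1}$ and $L$ is a multiple of $x_{1}$; in the second subcase the negative complement vector becomes $e_{d}$ and $L$ is a multiple of $x_{d}$. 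A $GL_{1}$-rescaling absorbs the scalar in $L$, and the two subcases are inequivalent since the signatures $(p-1,q)$ and $(p,q-1)$ of the restriction differ.

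For case (2), pick $v$ spanning the $1$-dimensional radical of $Q|_{\ker L}$; then $Q(v) = 0$. By non-degeneracy of $Q$ there exists $w$ with $B(v,w) \neq 0$, where $B$ is the symmetric bilinear form polarising $Q$, and such $w$ cannot lie in $\ker L$. I would replace $w$ by $w - \frac{Q(w)}{2B(v,w)}v$, which makes it $Q$-isotropic while preserving both $L(w)$ and $B(v,w)$, and then rescale so that $B(v,w) = 1$. Setting $e_{d} = v$ and $e_{1} = w$, these span a hyperbolic plane $P$. The key step, and what I expect to be the main obstacle, is to check that $P^{\perp_{Q}} \subset \ker L$: this follows because $B(v,\cdot)$ is a non-zero linear functional vanishing on $\ker L$, so $\ker B(v,\cdot) = \ker L$, and $P^{\perp_{Q}}$ is contained in this kernel by construction. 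Diagonalising $Q|_{P^{\perp_{Q}}}$, which is non-degenerate of signature $(p-1,q-1)$, produces $e_{2},\ldots,e_{d-1}$ with $Q(e_{i}) = \pm 1$ and $L(e_{i}) = 0$, assembling to give the canonical form in case (2).
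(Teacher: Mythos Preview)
Your proof is correct and takes a genuinely different route from the paper. The paper proceeds computationally: it first diagonalises $Q$ alone via Sylvester, then uses an element of $SO(p,q)$ to make the $x_{1}$-coefficient of $L$ non-zero, and finally performs a sequence of explicit coordinate substitutions (of the form $x_{1}\to x_{1}+L'(x)$ and $x_{k}\to x_{k}\pm\alpha_{k}x_{1}$) to clear cross terms, tracking the effect on $Q$ at each step until the canonical form emerges. Your argument instead works intrinsically with the bilinear geometry: you analyse $(\ker L)^{\perp_{Q}}$ to split the two cases by the dimension of the radical, and in each case build a basis adapted to $(Q,L)$ directly---an orthogonal splitting in case (1), and a hyperbolic pair $(w,v)$ plus the orthogonal complement $P^{\perp_{Q}}$ in case (2). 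The key observation that $\ker B(v,\cdot)=\ker L$, forcing $P^{\perp_{Q}}\subset\ker L$, is exactly what makes your construction work and is the conceptual counterpart of the paper's elimination of cross terms. Your approach is cleaner and makes the invariants governing uniqueness (the rank and, in case (1), the signature of $Q|_{\ker L}$) transparent from the outset; the paper's approach has the advantage that the transformations realising the equivalence are written down explicitly, which is convenient for the inductive extension to several linear forms in the subsequent lemma.
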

\begin{proof}
By Sylvester's Law we can always transform $Q\sim\sum_{i=1}^{p}x_{i}^{2}-\sum_{i=p+1}^{d}x_{i}^{2}$.
Next by applying an element of $SO\left(p,q\right)$ to the system
it is possible to ensure that the coefficient of $x_{1}$ in $L(x)$
is non zero. Now use the transformation $x_{1}\rightarrow x_{1}+L'\left(x\right)$
for $L'$ a linear form in the remaining variables, to get that 
\[
\left(Q,L\right)\sim\left(x_{1}^{2}+x_{1}L''\left(x\right)+Q'\left(x\right),x_{1}\right)
\]
for $Q'$ and $L''$ a quadratic form and linear form respectively,
in the variables not including $x_{1}$. Note that $Q'$ has signature
$\left(p-1,q\right)$ or $\left(p,q-1\right)$ if $\textrm{rank}\left(Q|_{\ker\left(L\right)}\right)=d-1$
and signature $\left(p-1,q-1\right)$ if $\textrm{rank}\left(Q|_{\ker\left(L\right)}\right)=d-2$.
Suppose we are in the first case, apply a transformation in the variables
not including $x_{1}$, to get that 
\[
\left(Q,L\right)\sim\left(x_{1}^{2}+2\sum_{i=2}^{d}\alpha_{i}x_{i}x_{1}+\sum_{i=2}^{\hat{p}}x_{i}^{2}-\sum_{i=\hat{p}+1}^{d}x_{i}^{2},x_{1}\right),
\]
where $\hat{p}=p$ or $\hat{p}=p+1$. Next, we use transformations
of the form 
\[
x_{k}\rightarrow\begin{cases}
x_{k}-\alpha_{k}x_{1} & \textrm{ for }2\leq k\leq\hat{p}\\
x_{k}+\alpha_{k}x_{1} & \textrm{ for }\hat{p}+1\leq k\leq d
\end{cases}
\]
 to get 
\[
\left(Q,L\right)\sim\left(x_{1}^{2}\left(1-\sum_{i=2}^{\hat{p}}\alpha_{i}^{2}+\sum_{i=\hat{p}+1}^{d}\alpha_{i}^{2}\right)+\sum_{i=2}^{\hat{p}}x_{i}^{2}-\sum_{i=\hat{p}+1}^{d}x_{i}^{2},x_{1}\right).
\]
If the coefficient of $x_{1}^{2}$ is positive, then $\hat{p}=p$
and we see that we are in case 1a of the Lemma, similarly if the coefficient
of $x_{1}^{2}$ is negative, then $\hat{p}=p+1$ and we see that,
after relabelling $x_{d}\rightarrow x_{1}$ and $x_{1}\rightarrow x_{d}$,
we are in case 1b of the Lemma.

Suppose that $\textrm{rank}\left(Q|_{\ker\left(L\right)}\right)=d-2$,
apply a transformation in the variables not including $x_{1}$ to
get that 
\[
\left(Q,L\right)\sim\left(x_{1}^{2}+2\sum_{i=2}^{d}\alpha_{i}x_{i}x_{1}+\sum_{i=2}^{p}x_{i}^{2}-\sum_{i=p+1}^{d-1}x_{i}^{2},x_{1}\right).
\]
Next, for $2\leq k\leq d-1$ we use transformations of the form $x_{k}\rightarrow x_{k}\pm\alpha_{k}x_{1}$
to make $\alpha_{i}$ zero for all $i\neq d$. Note that $\alpha_{d}\neq0$,
otherwise $Q$ would be degenerate, so to finish off we use the transformation
$x_{d}\rightarrow\frac{1}{2\alpha_{d}}\left(2x_{d}-x_{1}\right)$
and we see that we are in the second case of the Lemma. 
\end{proof}
We can now prove the main Lemma of this section. 
\begin{lem}
\label{canonical}For any pair $\left(Q,M\right)$, where $Q$ is
a non-degenerate quadratic form on $\mathbb{R}^{d}$ with signature
$(p,q)$, and $M:\mathbb{R}^{d}\rightarrow\mathbb{R}^{s}$ is a linear
map of rank $s$, if $Q|_{\ker\left(M\right)}$ is indefinite then
$\left(Q,M\right)\sim\left(Q_{0},M_{0}\right)$, where
\begin{alignat*}{1}
Q_{0}\left(x\right) & =Q_{m+1,\ldots,s}\left(x\right)+2\sum_{i=1}^{m}x_{i}x_{s+r+n+i}+\sum_{i=s+1}^{s+r}x_{i}^{2}-\sum_{i=s+r+1}^{s+r+n}x_{i}^{2}\\
M_{0}\left(x\right) & =\left(x_{1},\ldots,x_{s}\right),
\end{alignat*}
$m=d-s-\textrm{rank}\left(Q|_{\ker\left(M\right)}\right)$ and $Q_{m+1,\ldots,s}\left(x\right)$
is a non degenerate quadratic form in variables $x_{m+1},\ldots,x_{s}$
with signature $\left(p',q'\right)$, such that the following relations
hold, $r=p-m-p'\geq1$ and $n=q-m-q'\geq1$.\end{lem}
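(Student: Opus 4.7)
The plan is to identify the correct geometric structure behind the canonical form: the radical of $Q|_{\ker(M)}$ produces the hyperbolic pairs in $Q_0$, and a $Q$-orthogonal complement carries the remaining data. Set $K = \ker(M)$ and let $R = K \cap K^{\perp_Q}$ denote the radical of $Q|_K$, where $\perp_Q$ denotes orthogonality with respect to $Q$. From $\textrm{rank}(Q|_K) = \dim K - \dim R$ one reads off $\dim R = m$, and $R$ is totally $Q$-isotropic. By Witt's theorem there exists a complementary totally isotropic subspace $W$ with bases $v_1,\dots,v_m$ of $R$ and $w_1,\dots,w_m$ of $W$ satisfying $Q(v_i, w_j) = \delta_{ij}$. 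Setting $U = (R \oplus W)^{\perp_Q}$, one obtains a $Q$-orthogonal decomposition $V = (R \oplus W) \oplus U$ with $Q|_U$ non-degenerate of signature $(p-m, q-m)$.

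Next I would use the $GL_s(\mathbb{R})$ freedom to align the linear forms with $R$. Each $Q(v_i, \cdot)$ vanishes on $K$ and is therefore in the span of $L_1,\dots,L_s$; these $m$ forms are linearly independent because the $v_i$ are and $Q$ is non-degenerate. Applying a suitable $g_s \in GL_s(\mathbb{R})$, I may assume $L_i = Q(v_i, \cdot)$ for $i = 1,\dots,m$; automatically $L_i|_U = 0$ and $L_i(w_j) = \delta_{ij}$. Because $M$ has rank $s$, the remaining $L_{m+1},\dots,L_s$ restrict to a surjection $U \to \mathbb{R}^{s-m}$ whose kernel $U \cap K$ has dimension $r+n = d-s-m$. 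I would then replace each $w_j$ by $w_j' = w_j + \eta_j + \rho_j$ with $\eta_j \in U$ chosen so that $L_i(w_j') = 0$ for all $i > m$ (possible by the surjectivity just noted), and $\rho_j \in R$ chosen so that $Q(w_i', w_j') = 0$ is restored, which reduces to a solvable symmetric linear system in the coefficients of $\rho_j$ in the $v_i$ basis. The pairings $Q(v_i, w_j') = \delta_{ij}$ survive automatically because $v_i \perp_Q U$ and $R$ is isotropic.

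Finally I would pass to the new orthogonal complement $U^{\textrm{new}} := (R \oplus W')^{\perp_Q}$, where $W' = \textrm{span}(w_1',\dots,w_m')$; the map $U \to U^{\textrm{new}}$ given by adding a suitable correction in $R$ is a $Q$-isometry that preserves $M$ (since $R \subset K$), so $U^{\textrm{new}}$ inherits the structure of $U$, containing a subspace $U^{\textrm{new}} \cap K$ of signature $(r,n)$ and a $Q$-orthogonal complement of dimension $s-m$ on which $(L_{m+1},\dots,L_s)$ is an isomorphism. Assembling a basis of $V$ from $w_1',\dots,w_m'$ (coordinates $1$ through $m$), a basis of the complement of $U^{\textrm{new}} \cap K$ aligned with $L_{m+1},\dots,L_s$ (coordinates $m+1$ through $s$), a signature-$(r,n)$ diagonalizing basis of $U^{\textrm{new}} \cap K$ (coordinates $s+1$ through $s+r+n$), and $v_1,\dots,v_m$ (the last $m$ coordinates), direct computation yields $(Q_0, M_0)$. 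The identities $r = p - m - p'$ and $n = q - m - q'$ follow by summing signatures of the pieces, and the strict inequalities $r, n \geq 1$ express exactly the indefiniteness of $Q|_K$. The main obstacle I anticipate is the simultaneous modification of the $w_j$: isotropy and the vanishing of $L_i$ for $i > m$ must both be arranged, and they decouple because $R$-perturbations preserve the $L_i$-data (as $R \subset K$) while $U$-perturbations preserve the pairings with the $v_i$ (as $U \perp_Q R$).
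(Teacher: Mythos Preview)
Your argument is correct and takes a genuinely different route from the paper. The paper proceeds by induction on $s$: it applies the single-linear-form case (Lemma~\ref{lem:prop 2 canoical 1 d}) to $L_1$, splits off one coordinate (or a hyperbolic pair, depending on whether $\textrm{rank}(Q|_{\ker L_1})$ drops by one or two), cleans up the remaining $L_i$'s, and then invokes the inductive hypothesis on a system with $s-1$ linear forms. Your proof instead identifies the entire geometric structure in one shot: the radical $R$ of $Q|_{K}$ furnishes all $m$ hyperbolic pairs at once via Witt, the $GL_s$-freedom aligns the first $m$ linear forms with $Q(v_i,\cdot)$, and the simultaneous adjustment $w_j\mapsto w_j+\eta_j+\rho_j$ (first a $U$-correction to kill $L_i$ for $i>m$, then an $R$-correction to restore isotropy) is exactly the decoupling you flag at the end. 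The paper's induction is more hands-on and reuses the already-proved $s=1$ classification; your approach is more intrinsic, makes the parameter $m=\dim R$ visible from the outset, and explains \emph{why} the canonical form looks the way it does (hyperbolic block from the radical, diagonal block from $K/R$, residual block carrying the non-kernel linear forms). Both are of comparable length once written out; yours generalises more readily to other ground fields since it rests on Witt rather than explicit Sylvester-type diagonalisation.
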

\begin{proof}
We will show that the pair $\left(Q,M\right)$ is equivalent to the
pair
\begin{alignat*}{1}
Q_{0}'\left(x\right) & =Q_{m+1,\ldots,s}\left(x\right)+2\sum_{i=1}^{m}x_{i}x_{d-i+1}+\sum_{i=s+1}^{s+r}x_{i}^{2}-\sum_{i=s+r+1}^{s+r+n}x_{i}^{2}\\
M_{0}'\left(x\right) & =\left(x_{1},\ldots,x_{s}\right)
\end{alignat*}
which is readily seen to be equivalent to the pair $\left(Q_{0},M_{0}\right)$
after relabelling $x_{d-i-1}\rightarrow x_{s+r+n+i}$ for $1\leq i\leq m$.
We proceed by induction on $s$. For $s=1$ we know from Lemma \ref{lem:prop 2 canoical 1 d}
that the conclusion of the Lemma holds, so suppose the Lemma holds
for $s\leq k-1$. Let $s=k$, and suppose that $M=\left(L_{1},\ldots,L{}_{k}\right)$
for $L_{1},\ldots,L{}_{k}$ non zero linear forms on $\mathbb{R}^{d}$.

If $\textrm{rank}\left(Q|_{\ker\left(L_{1}\right)}\right)=d-1$, using
Lemma \ref{lem:prop 2 canoical 1 d} it is clear that we can transform
our system into 
\[
\left(\begin{array}{c}
Q\\
M
\end{array}\right)\sim\left(\begin{array}{c}
\sum_{i=1}^{p}x_{i}^{2}-\sum_{i=p+1}^{d}x_{i}^{2}\\
\left(x_{l},L'_{2},\ldots,L'_{k}\right)
\end{array}\right),
\]
where $L'_{2},\ldots,L'_{k}$ are linear forms on $\mathbb{R}^{d}$
and $l\in\left\{ 1,d\right\} $. Next we can eliminate the coefficient
of $x_{l}$ in $L'_{2},\ldots,L'_{k}$ by subtracting some multiple
of $x_{l}$. Then, relabel $x_{l}\rightarrow x_{1}$ and $x_{1}\rightarrow x_{l}$
and apply the inductive hypothesis to see that the conclusion of the
Lemma holds. 

If $\textrm{rank}\left(Q|_{\ker\left(L_{1}\right)}\right)=d-2$, using
Lemma \ref{lem:prop 2 canoical 1 d} we get 
\[
\left(\begin{array}{c}
Q\\
M
\end{array}\right)\sim\left(\begin{array}{c}
2x_{1}x_{d}+\sum_{i=2}^{p}x_{i}^{2}-\sum_{i=p+1}^{d-1}x_{i}^{2}\\
\left(x_{1},L'_{2},\ldots,L'_{k}\right)
\end{array}\right).
\]
Again we can eliminate the coefficient of $x_{1}$ in $L'_{2},\ldots,L'_{k}$
by subtracting some multiple of $x_{1}$. Suppose that the coefficient
of $x_{d}$ is zero for each $L'_{2},\ldots,L'_{k}$, in this case
we are in position to apply the inductive hypothesis and get to the
conclusion of the lemma. Suppose the coefficient of $x_{d}$ in $L'_{i}$
is non zero for some $2\leq i\leq k$, without loss of generality
suppose that $i=2$, in particular suppose $L'_{2}\left(x\right)=L''\left(x\right)+\alpha_{d}x_{d}$
for some linear form $L''$ in variables $x_{2},\ldots,x_{d-1}$.
Use a transformation of the form $x_{d}\rightarrow\frac{1}{\alpha_{d}}\left(x_{d}-L''\left(x\right)\right)$
to get that 
\[
\left(\begin{array}{c}
Q\\
M
\end{array}\right)\sim\left(\begin{array}{c}
\frac{2}{\alpha_{d}}x_{1}x_{d}+\sum_{i=2}^{d-1}x_{i}\beta_{i}x_{1}+\sum_{i=2}^{p}x_{i}^{2}-\sum_{i=p+1}^{d-1}x_{i}^{2}\\
\left(x_{1},x_{d},L'_{3},\ldots,L'_{k}\right)
\end{array}\right)
\]
Next, for $2\leq k\leq d-1$ we use transformations of the form $x_{k}\rightarrow x_{k}\pm\beta_{k}x_{1}$
to make $\beta_{i}$ zero for all $i$. After we have done this we
end up with
\[
\left(\begin{array}{c}
Q\\
M
\end{array}\right)\sim\left(\begin{array}{c}
Q_{1,d}\left(x\right)+\sum_{i=2}^{p}x_{i}^{2}-\sum_{i=p+1}^{d-1}x_{i}^{2}\\
\left(x_{1},x_{d},L''_{3},\ldots,L''_{k}\right)
\end{array}\right),
\]
where $Q_{1,d}$ is a quadratic form in the variables $x_{1}$ and
$x_{d}$ and $L''_{3},\dots,L''_{k}$ are linear forms in variables
$x_{1},\ldots,x_{d}$. If we relabel $x_{2}\rightarrow x_{d}$ and
$x_{d}\rightarrow x_{2}$ and eliminate the $x_{2}$ and $x_{1}$
co-ordinates from $L''_{3},\ldots,L''_{k}$ we can apply the inductive
hypothesis and get the desired conclusion. The assertion that $Q_{m+1,\ldots,s}\left(x\right)$
is a non degenerate quadratic form in variables $x_{m+1},\ldots,x_{s}$
follows from the fact that $Q$ is non degenerate. We see that $Q_{m+1,\ldots,s}\left(x\right)$
has signature $\left(p',q'\right)$ where $r=p-m-p'$ and $n=q-m-q'$
because the signature of $2\sum_{i=1}^{m}x_{i}x_{d-i+1}+\sum_{i=s+1}^{s+r}x_{i}^{2}-\sum_{i=s+r+1}^{s+r+n}x_{i}^{2}$
is $\left(r+m,n+m\right)$. Finally, the assumption that $Q|_{\ker\left(M\right)}$
is indefinite means that $r\geq1$ and $n\geq1$.
\end{proof}

\subsection{Construction of a dynamical system}

In order to make use of Theorem \ref{thm:(Ranghunthans-togological-conjec}
we need to construct a dynamical system. For any pair $\left(\mathcal{Q},\mathcal{M}\right)$
consisting of a non degenerate quadratic form and a linear map on
$\mathbb{R}^{d}$. Define $G_{\mathcal{Q}}$ to be the connected component
containing the identity of $\left\{ g\in SL_{d}\left(\mathbb{R}\right):\mathcal{Q}\left(gx\right)=\mathcal{Q}\left(x\right)\right\} $.
Let $\Gamma_{\mathcal{Q}}=G_{\mathcal{Q}}\cap SL_{d}\left(\mathbb{Z}\right)$
and $H_{\mathcal{Q},\mathcal{M}}=\left\{ g\in G_{\mathcal{Q}}:\mathcal{M}\left(gx\right)=\mathcal{M}\left(x\right)\right\} $. 

Once and for all, fix a pair $\left(Q,M\right)$ consisting of a quadratic
form and a linear map satisfying the conditions of Theorem \ref{conj:multilinear}.
Suppose that $Q$ has signature $\left(p,q\right)$. It is a standard
fact that $G_{Q}\cong SO\left(p,q\right)^{o}$ is a connected Lie
group. Since $Q$ is a rational form, $G_{Q}$ is defined over the
rationals. Because $G_{Q}$ is semisimple and therefore does not admit
any non trivial rational characters, this means that $\Gamma_{Q}$
is a lattice in $G_{Q}$ (cf. \cite{MR1278263}, Theorem 4.13). Since
a priori, $H_{Q,M}$ may not be generated by one parameter unipotent
subgroups, our first aim is to define $H_{Q,M}^{*}\leq H_{Q,M}$ such
that $H_{Q,M}^{*}$ is generated by unipotent subgroups, we will then
consider the dynamical system that arises from $H_{Q,M}^{*}$ acting
on $G_{Q}/\Gamma_{Q}$. Note that condition \ref{enu:condition 2}
of Theorem \ref{conj:multilinear} implies that $H_{Q,M}$ will be
non compact, and so there is hope that such an $H_{Q,M}^{*}$ exists,
in section \ref{sub:Definition-of H} an explicit description of $H_{Q,M}^{*}$
is given. 

Let the pair $\left(Q_{0},M_{0}\right)$ be as defined in Lemma \ref{canonical}.
Let $g_{d}\in GL_{d}\left(\mathbb{R}\right)$ and $g_{s}\in GL_{ds}\left(\mathbb{R}\right)$
be such that $\left(Q\left(x\right),M\left(x\right)\right)=\left(Q_{0}\left(g_{d}x\right),g_{s}M_{0}\left(g_{d}x\right)\right)$
for all $x\in\mathbb{R}^{d}$. We will use the shorthand $G_{Q_{0}}=G_{0}$,
$\Gamma_{Q_{0}}=\Gamma_{0}$ and $H_{Q_{0},M_{0}}=$$H_{0}$.

\subsection{Definition of $H_{Q,M}^{*}$\label{sub:Definition-of H}}

For non negative integers $z_{1}$ and $z_{2}$ we will use the notation
$I_{z_{1}}$ to denote the $z_{1}\times z_{1}$ identity matrix and
$I_{z_{1},z_{2}}$ to denote $\left(\begin{smallmatrix}I_{z_{1}}\\
 & -I_{z_{2}}
\end{smallmatrix}\right)$ . Also, $\mbox{Mat}_{z_{1}}\left(\mathbb{R}\right)$ denotes square,
$z_{1}\times z_{1}$, matrices with entries in $\mathbb{R}$ and $\mbox{Mat}_{z_{1},z_{2}}\left(\mathbb{R}\right)$
denotes matrices with $z_{1}$ rows, $z_{2}$ columns and entries
in $\mathbb{R}$.  For any matrix, $m$ the notation $m^{T}$ is used
to denote the transpose of $m$. Let $O\left(z_{1},z_{2}\right)=\left\{ g\in GL_{z_{1}+z_{2}}\left(\mathbb{R}\right):g^{T}I_{z_{1},z_{2}}g=I_{z_{1},z_{2}}\right\} $
and $SO\left(z_{1},z_{2}\right)=O\left(z_{1},z_{2}\right)\cap SL_{z_{1}+z_{2}}\left(\mathbb{R}\right)$.
Let the parameters $p',q',r,n$ and $m$ be as defined in Lemma \ref{canonical}.
Let $i_{1}$,$i_{2}$ and $i_{3}$ be integers such that $0\leq i_{1}\leq p'$,
$0\leq i_{2}\leq q'$ and $-\min\left\{ p',q'\right\} \leq i_{3}\leq m$.
Let $B=\mbox{Mat}_{m-i_{3},r+n+i_{1}+i_{2}+2i_{3}}\left(\mathbb{R}\right)$
and for $t\in B$ let 
\[
B\left(t\right)=\left\{ m\in\mbox{Mat}_{m-i_{3}}\left(\mathbb{R}\right):m+m^{T}+tI_{r+i_{1}+i_{3},n+i_{2}+i_{3}}t^{T}=0\right\} .
\]
Now define 
\[
D_{i_{1},i_{2},i_{3}}=\left(\begin{array}{ccc}
I_{s-i_{1}-i_{2}-i_{3}} & 0 & 0\\
0 & SO\left(r+i_{1}+i_{3},n+i_{2}+i_{3}\right)^{o} & 0\\
0 & 0 & I_{m-i_{3}}
\end{array}\right).
\]
and 
\[
U_{i_{1},i_{2},i_{3}}=\left\{ \begin{array}{l}
\left(\begin{array}{cccc}
I_{m-i_{3}} & 0 & 0 & 0\\
0 & I_{s-m-i_{1}-i_{2}} & 0 & 0\\
-I_{r+i_{1}+i_{3},n+i_{2}+i_{3}}t^{T} & 0 & I_{r+n+i_{1}+i_{2}+2i_{3}} & 0\\
s & 0 & t & I_{m-i_{3}}
\end{array}\right)\end{array}:t\in B,s\in B\left(t\right)\right\} .
\]
For convenience we denote $D_{0,0,0}=D$ and $U_{0,0,0}=U$. It is
straightforward to verify that $U$ and $D$ are subgroups of $H_{0}$
and that $U$ is normalised by $D$. Define 
\[
H_{0}^{*}=UD.
\]
More generally $U_{i_{1},i_{2},i_{3}}$ is normalised by $D_{i_{1},i_{2},i_{3}}$
and in Section 4 we will use $\left(UD\right){}_{i_{1},i_{2},i_{3}}$
to denote the subgroup $U_{i_{1},i_{2},i_{3}}D_{i_{1},i_{2},i_{3}}$.
Conditions \ref{enu:condition 1} and \ref{enu:condition 2} of Theorem
\ref{conj:multilinear} imply that $D$ is generated by one parameter
unipotent subgroups. To see this note that $\textrm{rank}\left(Q|_{\ker\left(M\right)}\right)>2$
implies that $r+n\geq3$. Moreover, as noted in Lemma \ref{canonical},
the fact that $Q|_{\ker\left(M\right)}$ is indefinite implies that
$r\geq1$ and $n\geq1$. Therefore, since $U$ is a unipotent subgroup,
$H_{0}^{*}$ defined in this way is connected and generated by one
parameter unipotent subgroups. Define 
\[
H_{Q,M}^{*}=g_{d}H_{0}^{*}g_{d}^{-1}.
\]
Now it is clear that $H_{Q,M}^{*}\leq H_{Q,M}$ and is generated by
one parameter unipotent subgroups as required.

\section{Lemmas concerning invariant subspaces.}

Let $\mathcal{L}$ be the space of $d$ dimensional linear forms defined
over $\mathbb{\mathbb{\mathbb{C}}}$. Let $e_{1},\dots,e_{d}$ be
the standard basis of $\mathbb{C}^{d}$ and $x_{1},\dots,x_{d}$ be
the corresponding basis of $\mathcal{L}$. Let $M=\left(L_{1},\ldots,L_{s}\right)$
for $L_{i}\in\mathcal{L}$. For any group $\mathcal{G}$ and any set
$\mathcal{S}$ with a well defined action, $\mathcal{G}\times\mathcal{S}\rightarrow\mathcal{S}$,
let $\mathcal{S}^{\mathcal{G}}=\left\{ s\in\mathcal{S}:g.s=s\textrm{ for all }g\in\mathcal{G}\right\} $.
The subspace $\mathcal{L}^{\mathcal{G}}$ will be referred to as the
`fixed vectors of $\mathcal{G}$'. 

We will say that a subspace $\mathcal{V}$ of $\mathcal{L}$ is defined
over $\mathbb{Q}$ if there exists a basis for $\mathcal{V}$ where
each basis vector is in $\mathbb{Q}^{d}$. First we classify the space
of vectors fixed by $H_{Q,M}^{*}$. For any group $\mathcal{G}$ let
the action $\mathcal{G}\times\mathcal{L}\rightarrow\mathcal{L}$ be
defined by $\left(g,L\left(x\right)\right)\rightarrow L\left(gx\right)=g^{T}l.x$
where $l\in\mathbb{C}^{d}$ such that $L\left(x\right)=l.x$. From
now on, this action will just be denoted by $gL$. 
\begin{lem}
\label{lem:Forms fixed by H star} $\mathcal{L}^{H_{Q,M}^{*}}=\left\langle L_{1},\ldots,L_{s}\right\rangle $. \end{lem}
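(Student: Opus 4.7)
The plan is to reduce via conjugation by $g_d$ to the canonical pair $(Q_0, M_0)$ and to show $\mathcal{L}^{H_0^*} = \langle x_1, \ldots, x_s\rangle$, then transfer back: since $M = g_s M_0 \circ g_d$ with $g_s$ invertible, the forms $x \mapsto (g_d x)_i$ for $i \leq s$ span the same subspace of $\mathcal{L}$ as $L_1, \ldots, L_s$. The inclusion $\langle L_1, \ldots, L_s\rangle \subseteq \mathcal{L}^{H_{Q,M}^*}$ is immediate from the definitions, since any $g \in H_{Q,M}^* \leq H_{Q,M}$ satisfies $M(gx) = M(x)$ and hence fixes every $L_i$.

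For the non-trivial inclusion I would take $L(x) = \sum_{i=1}^d c_i x_i \in \mathcal{L}^{H_0^*}$ and split the coefficient vector as $c = (c^{(1)}, c^{(2)}, c^{(3)}, c^{(4)})$ with block sizes $m,\, s-m,\, r+n,\, m$ matching the structure of $U$ in Section \ref{sub:Definition-of H}. Invariance under $D = D_{0,0,0}$ forces $c^{(3)} \in \mathbb{C}^{r+n}$ to be fixed by the standard representation of $SO(r,n)^o$ on $\mathbb{R}^{r+n}$; since conditions \ref{enu:condition 1}--\ref{enu:condition 2} of Theorem \ref{conj:multilinear} guarantee $r,n \geq 1$ and $r+n \geq 3$, this representation has no nonzero invariants, so $c^{(3)} = 0$. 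Next, writing $x = (a, b, c', e)^T$ in the same block decomposition, the explicit form of $u \in U$ with parameters $t \in B$, $\sigma \in B(t)$ gives
\[
L(ux) - L(x) = -(c^{(3)})^T I_{r,n} t^T a + (c^{(4)})^T (\sigma a + t c') = (c^{(4)})^T (\sigma a + t c').
\]
Specialising $a = 0$ yields $(c^{(4)})^T t c' = 0$ for all $c' \in \mathbb{R}^{r+n}$ and all $t \in B = \mathrm{Mat}_{m, r+n}(\mathbb{R})$, which forces $c^{(4)} = 0$. Combining the two steps, $L \in \langle x_1, \ldots, x_s\rangle$.

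The main obstacle is the bookkeeping in the $U$-step: one has to unpack the block description of $U$ from Section \ref{sub:Definition-of H}, confirm that $t$ really ranges over all of $B$ without being constrained, and note that the coupling $\sigma \in B(t)$ is harmless here because the specialisation $a=0$ eliminates the $\sigma$-dependent term. The $D$-step is conceptually routine but quantitatively relies on the inequality $r+n \geq 3$ supplied by the hypotheses of Theorem \ref{conj:multilinear}; without it, $SO(r,n)^o$ could admit nonzero invariants in its defining representation and the argument would break down.
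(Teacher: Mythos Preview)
Your proof is correct and follows essentially the same route as the paper: reduce by conjugation to the canonical pair $(Q_0,M_0)$, use $D$-invariance to force the middle block $c^{(3)}$ to vanish (since $SO(r,n)^o$ has no nonzero fixed vectors in its standard representation), and then use $U$-invariance to force the last block $c^{(4)}$ to vanish. The only difference is that you carry out the $U$-step by an explicit coordinate computation of $L(ux)-L(x)$, whereas the paper argues more briefly that any nonzero $L\in\langle x_{d-m+1},\dots,x_d\rangle$ is moved out of that span by some $u\in U$; these are the same observation.
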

\begin{proof}
It is clear that $\left\langle L_{1},\ldots,L_{s}\right\rangle \subseteq\mathcal{L}^{H_{Q,M}^{*}}$.
Suppose there exists $L\in\mathcal{L}^{H_{Q,M}^{*}}$ such that $L\notin\left\langle L_{1},\ldots,L_{s}\right\rangle $,
or equivalently, there exists $L\in\mathcal{L}^{H_{0}^{*}}$ such
that $L\notin\left\langle x_{1},\ldots,x_{s}\right\rangle $. Let
$l\in\mathbb{C}^{d}$ and write $L\left(x\right)=l.x$, since $D<H_{0}^{*}$
we may suppose that $l_{s+1},\ldots,l_{d-m}=0$ as $SO\left(r,n\right)^{o}$
has no fixed vectors. But we also have $U<H_{0}^{*}$ and for every
non zero $L\in\left\langle x_{d-m+1},\ldots,x_{d}\right\rangle $
there exists $u\in U$ such that $uL\notin\left\langle x_{d-m+1},\ldots,x_{d}\right\rangle $.
This implies that $l_{d-m+1},\ldots,l_{d}=0$, and thus $L\in\left\langle x_{1},\ldots,x_{s}\right\rangle $
and we have a contradiction. 
\end{proof}
Recall, condition \ref{enu:condition 3} of Theorem \ref{conj:multilinear}
is that for all $\alpha\in\mathbb{R}^{s}\setminus\left\{ 0\right\} $,
$\alpha_{1}L_{1}+\dots+\alpha_{s}L_{s}$ is non rational. From this
and Lemma \ref{lem:Forms fixed by H star} we can deduce the following.
\begin{cor}
\label{cor:rationality contrad}There exists no non trivial subspaces
defined over $\mathbb{Q}$ and contained in $\mathcal{L}^{H_{Q,M}^{*}}$.\end{cor}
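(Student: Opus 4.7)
The plan is to combine Lemma \ref{lem:Forms fixed by H star} with hypothesis \ref{enu:condition 3} of Theorem \ref{conj:multilinear} directly, with only a small subtlety coming from the fact that $\mathcal{L}$ was defined over $\mathbb{C}$.

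First I would argue by contradiction: assume there exists a non-trivial subspace $\mathcal{V}\subseteq\mathcal{L}^{H_{Q,M}^{*}}$ defined over $\mathbb{Q}$. By definition this means $\mathcal{V}$ has a basis consisting of linear forms whose coefficient vectors lie in $\mathbb{Q}^{d}$. In particular, there is a non-zero form $L\in\mathcal{V}$ given by $L(x)=\ell\cdot x$ for some $\ell\in\mathbb{Q}^{d}\setminus\{0\}$. By Lemma \ref{lem:Forms fixed by H star} we have $\mathcal{L}^{H_{Q,M}^{*}}=\langle L_{1},\ldots,L_{s}\rangle$, so $L$ can be written as a $\mathbb{C}$-linear combination $L=\sum_{i=1}^{s}c_{i}L_{i}$ for some $c_{i}\in\mathbb{C}$.

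Next I would upgrade this to a \emph{real} linear combination. Since $M$ has rank $s$, the forms $L_{1},\ldots,L_{s}$ are $\mathbb{R}$-linearly independent, and because each $L_{i}$ has real coefficients they are also $\mathbb{C}$-linearly independent. Writing $c_{i}=\alpha_{i}+i\beta_{i}$ and comparing real and imaginary parts of the identity $L=\sum c_{i}L_{i}$ (using that $L$ itself is rational, hence real), independence forces $\beta_{i}=0$ for all $i$. Hence $L=\sum_{i=1}^{s}\alpha_{i}L_{i}$ with $\alpha=(\alpha_{1},\ldots,\alpha_{s})\in\mathbb{R}^{s}\setminus\{0\}$.

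Finally, the form $\alpha_{1}L_{1}+\cdots+\alpha_{s}L_{s}=L$ has rational coefficient vector $\ell$, contradicting condition \ref{enu:condition 3} of Theorem \ref{conj:multilinear}. There is no serious obstacle in this argument; the only point requiring a moment of care is the passage from $\mathbb{C}$-coefficients (forced by working in $\mathcal{L}$) to $\mathbb{R}$-coefficients, which is handled by the real linear independence of $L_{1},\ldots,L_{s}$.
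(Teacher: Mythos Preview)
Your argument is correct and follows essentially the same route as the paper: pick a rational vector in the subspace, express it via Lemma \ref{lem:Forms fixed by H star} as a combination of the $L_i$, and contradict condition \ref{enu:condition 3}. Your explicit reduction from $\mathbb{C}$-coefficients to $\mathbb{R}$-coefficients is a point the paper leaves implicit, so your version is in fact slightly more careful.
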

\begin{proof}
Suppose there exists $\mathcal{U}\subseteq\mathcal{L}^{H_{Q,M}^{*}}$
such that $\mathcal{U}$ is defined over $\mathbb{Q}$. Lemma \ref{lem:Forms fixed by H star}
implies that $\mathcal{U}\subseteq\left\langle L_{1},\ldots,L_{s}\right\rangle $.
The fact that $\mathcal{U}$ is defined over $\mathbb{Q}$ means there
exists $u_{1},\ldots,u_{\dim\left(\mathcal{U}\right)}\in\mathbb{Q}^{d}$
such that $\mathcal{U}=\left\langle u_{1},\ldots,u_{\dim\left(\mathcal{U}\right)}\right\rangle $.
Therefore we can write each $u_{i}$ as a linear combination of the
$L_{i}$'s. Since the $u_{i}$ are rational this contradicts condition
\ref{enu:condition 3} of Theorem \ref{conj:multilinear} as required.
\end{proof}
Let $\mathcal{L}_{m}=\left\langle x_{s+1},\ldots,x_{d-m}\right\rangle $
where $m=d-s-\textrm{rank}\left(Q|_{\ker\left(M\right)}\right)$ as
defined in Lemma \ref{canonical}, in particular since $\textrm{rank}\left(Q|_{\ker\left(M\right)}\right)>2$,
$\dim\left(\mathcal{L}_{m}\right)=d-m-s>2$. The next Lemma classifies
two distinct possibilities for any $D$ invariant subspace. 
\begin{lem}
\label{lem:D invarient subspaces}If $\mathcal{V}\subseteq\mathcal{L}$
is a $D$ invariant subspace, then either
\begin{enumerate}
\item $\mathcal{V}\subseteq\mathcal{L}^{D}$
\item $\mathcal{V}=\mathcal{L}_{m}\oplus\mathcal{U}$ where $\mathcal{U}\subseteq\mathcal{L}^{D}$. 
\end{enumerate}
\end{lem}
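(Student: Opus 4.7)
My plan is to view $\mathcal{L}$ as a finite-dimensional representation of $D$ and exploit complete reducibility together with irreducibility of the standard representation of $SO(r,n)^o$. First I would read off from the block-diagonal form of $D$ that $D$ acts trivially on the subspaces $\langle x_{1},\ldots,x_{s}\rangle$ and $\langle x_{d-m+1},\ldots,x_{d}\rangle$, and acts on $\mathcal{L}_{m}=\langle x_{s+1},\ldots,x_{d-m}\rangle$ via (a twist of) the standard representation of $SO(r,n)^{o}$ on $\mathbb{C}^{r+n}$. In particular $\langle x_{1},\ldots,x_{s},x_{d-m+1},\ldots,x_{d}\rangle\subseteq\mathcal{L}^{D}$ and $\mathcal{L}=\mathcal{L}^{D}\oplus\mathcal{L}_{m}$, provided $\mathcal{L}_{m}$ has no nonzero $D$-fixed vector.

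The crucial ingredient is that $\mathcal{L}_{m}$ is irreducible as a $D$-module. The hypothesis $\textrm{rank}(Q|_{\ker(M)})>2$ forces $r+n\geq3$, while $r,n\geq1$ follows from $Q|_{\ker(M)}$ being indefinite (both recorded in Lemma \ref{canonical}). For $r+n\geq3$ the complex Lie algebra $\mathfrak{so}(r+n,\mathbb{C})$ acts irreducibly on $\mathbb{C}^{r+n}$, and since the complexification of $\mathfrak{so}(r,n)$ is $\mathfrak{so}(r+n,\mathbb{C})$, this gives the required irreducibility of $\mathcal{L}_{m}$ as a $D$-module. In particular $\mathcal{L}_{m}\cap\mathcal{L}^{D}=\{0\}$, so
\[
\mathcal{L}=\mathcal{L}^{D}\oplus\mathcal{L}_{m}
\]
is the isotypic decomposition of $\mathcal{L}$ under $D$, the two isotypes being non-isomorphic (one is the trivial representation, the other an irreducible representation of dimension $\geq3$).

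To finish, let $\mathcal{V}\subseteq\mathcal{L}$ be any $D$-invariant subspace. Since $D$ is semisimple, every $D$-submodule of $\mathcal{L}$ decomposes as the direct sum of its intersections with the isotypic components, so
\[
\mathcal{V}=(\mathcal{V}\cap\mathcal{L}^{D})\oplus(\mathcal{V}\cap\mathcal{L}_{m}).
\]
The second summand is a $D$-submodule of the irreducible module $\mathcal{L}_{m}$, hence equals either $\{0\}$ or $\mathcal{L}_{m}$. In the first case $\mathcal{V}\subseteq\mathcal{L}^{D}$, which is conclusion (1). In the second case $\mathcal{V}=\mathcal{L}_{m}\oplus\mathcal{U}$ with $\mathcal{U}:=\mathcal{V}\cap\mathcal{L}^{D}\subseteq\mathcal{L}^{D}$, which is conclusion (2). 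The only genuinely non-formal step is the irreducibility of $\mathcal{L}_{m}$; everything else is a direct consequence of the block-diagonal shape of $D$ and semisimplicity of $SO(r,n)^{o}$.
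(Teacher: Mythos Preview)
Your proof is correct and rests on the same two ingredients the paper uses: the splitting $\mathcal{L}=\mathcal{L}^{D}\oplus\mathcal{L}_{m}$ and the irreducibility of $\mathcal{L}_{m}$ under $D$. The only difference is in how the dichotomy is extracted. The paper argues by hand: given $v\in\mathcal{V}\setminus\mathcal{L}^{D}$, write $v=v_{1}+v_{2}$ with $v_{2}\in\mathcal{L}_{m}$ nonzero, note that $dv-v=dv_{2}-v_{2}\in\mathcal{V}\cap\mathcal{L}_{m}$ is nonzero for some $d\in D$, and then use irreducibility to conclude $\mathcal{L}_{m}\subseteq\mathcal{V}$. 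You instead invoke complete reducibility of the semisimple group $SO(r,n)^{o}$ to get $\mathcal{V}=(\mathcal{V}\cap\mathcal{L}^{D})\oplus(\mathcal{V}\cap\mathcal{L}_{m})$ directly from the isotypic decomposition. Your version is slicker but imports semisimplicity; the paper's version is more self-contained and never needs to name complete reducibility, only irreducibility of $\mathcal{L}_{m}$ and the trivial observation that $dv_{1}=v_{1}$.
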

\begin{proof}
First note that if $\mathcal{V}\subseteq\mathcal{L}^{D}$, then it
is clear that $\mathcal{V}$ will be $D$ invariant. Suppose that
$\mathcal{V}\subseteq\mathcal{L}$ is a $D$ invariant linear subspace
such that $\mathcal{V}\nsubseteq\mathcal{L}^{D}$, since $\mathcal{L}=\mathcal{L}^{D}\oplus\mathcal{L}_{m}$
there exists $v\in\mathcal{V}$ such that $v=v_{1}+v_{2}$ for $v_{1}\in\mathcal{L}^{D}$
and $v_{2}\in\mathcal{L}_{m}$, with $v_{2}\neq0$. Now for any $d\in D$
we have $dv-v=v_{1}+dv_{2}-\left(v_{1}+v_{2}\right)=dv_{2}-v_{2}\in\mathcal{V}$,
since $\mathcal{L}^{D}$ consists of forms fixed by $D$. We can choose
$d\in D$ so that $w=dv_{2}-v_{2}\neq0$, but $w\in\mathcal{V}\cap\mathcal{L}_{m}$,
this means that $\left\langle Dw\right\rangle $ is a $D$ invariant
subspace such that $\left\langle Dw\right\rangle \subseteq\mathcal{L}_{m}$,
but this implies $\left\langle Dw\right\rangle =\mathcal{L}_{m}$
because $D$ acts irreducibly on $\mathcal{L}_{m}$. Since $\left\langle Dw\right\rangle =\mathcal{L}_{m}$,
we have that $\mathcal{L}_{m}\subseteq\mathcal{V}$ and as $v=v_{1}+v_{2}\in\mathcal{V}$
for $v_{1}\in\mathcal{L}^{D}$ and $v_{2}\in\mathcal{L}_{m}$ we have
$v_{1}=v-v_{2}\in\mathcal{V}$ and so we see that $\mathcal{V}=\mathcal{L}_{m}\oplus\left(\mathcal{V}\cap\mathcal{L}^{D}\right)$,
which implies that we are in the second case. 
\end{proof}
The next Lemma extends Lemma \ref{lem:D invarient subspaces} to classify
two distinct possibilities for any $H_{0}^{*}$ invariant subspace
of $\mathcal{L}$. For convenience, let $\mathcal{L}_{0}=\left\langle x_{s+1},\ldots,x_{d}\right\rangle $
and $J_{m}=\left(\begin{smallmatrix}0 & 0 & I_{m}\\
0 & I_{d-2m} & 0\\
I_{m} & 0 & 0
\end{smallmatrix}\right)$ where $J_{0}=I_{d}$. 
\begin{lem}
\label{lem:general h hat star inv subspaces}If $\mathcal{V}\subseteq\mathcal{L}$
is a $H_{0}^{*}$ invariant subspace, then either
\begin{enumerate}
\item $\mathcal{V}\subseteq\mathcal{L}^{H_{0}^{*}}$
\item $\mathcal{V}=J_{m}\mathcal{L}_{0}\oplus\mathcal{U}$ where $\mathcal{U}\subseteq J_{m}\mathcal{L}^{H_{0}^{*}}$
. 
\end{enumerate}
\end{lem}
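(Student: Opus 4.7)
The plan is to leverage the decomposition $H_0^* = UD$: a subspace is $H_0^*$-invariant if and only if it is both $D$- and $U$-invariant. I would therefore first apply Lemma \ref{lem:D invarient subspaces} to $\mathcal{V}$, which gives a dichotomy, and then sharpen each alternative by imposing $U$-invariance.

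For the analysis, decompose $\mathcal{L} = V_1 \oplus V_2 \oplus V_3 \oplus V_4$ corresponding to the index sets $\{1,\ldots,m\}$, $\{m+1,\ldots,s\}$, $\{s+1,\ldots,d-m\}$, $\{d-m+1,\ldots,d\}$. Then $\mathcal{L}^D = V_1 \oplus V_2 \oplus V_4$, $\mathcal{L}^{H_0^*} = V_1 \oplus V_2$, $\mathcal{L}_m = V_3$, $J_m \mathcal{L}_0 = V_1 \oplus V_3$, and $J_m \mathcal{L}^{H_0^*} = V_2 \oplus V_4$. A direct computation from the defining matrix of $U$ shows that the action on a form with coefficient vector $(l_1, l_2, l_3, l_4)$ produces the new coefficient vector
\[
(l_1 - t I_{r,n} l_3 + s^T l_4,\; l_2,\; l_3 + t^T l_4,\; l_4).
\]

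In the first alternative of Lemma \ref{lem:D invarient subspaces}, namely $\mathcal{V} \subseteq \mathcal{L}^D$, every $v \in \mathcal{V}$ has $l_3 = 0$, so $uv - v = (s^T l_4,\, 0,\, t^T l_4,\, 0) \in \mathcal{V}$. If some $v$ had $l_4 \neq 0$, then an appropriate choice of $t$ would produce a nonzero $V_3$-component, contradicting $\mathcal{V} \subseteq \mathcal{L}^D$. Hence $l_4 \equiv 0$ on $\mathcal{V}$, giving $\mathcal{V} \subseteq V_1 \oplus V_2 = \mathcal{L}^{H_0^*}$, the first conclusion. In the second alternative, $\mathcal{V} = \mathcal{L}_m \oplus \mathcal{U}'$ with $\mathcal{U}' \subseteq \mathcal{L}^D$, so $V_3 \subseteq \mathcal{V}$. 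Applying $u$ to $(0, 0, l_3, 0) \in \mathcal{V}$ and subtracting yields $(-t I_{r,n} l_3,\, 0,\, 0,\, 0) \in \mathcal{V}$; varying $t$ for fixed nonzero $l_3$ forces $V_1 \subseteq \mathcal{V}$, hence $J_m \mathcal{L}_0 = V_1 \oplus V_3 \subseteq \mathcal{V}$. Splitting any $v \in \mathcal{V}$ into its $(V_1 \oplus V_3)$-part (which lies in $J_m \mathcal{L}_0 \subseteq \mathcal{V}$) and its $(V_2 \oplus V_4)$-part (which therefore also lies in $\mathcal{V}$) exhibits $\mathcal{V} = J_m \mathcal{L}_0 \oplus \mathcal{U}$ with $\mathcal{U} := \mathcal{V} \cap (V_2 \oplus V_4) \subseteq J_m \mathcal{L}^{H_0^*}$, the second conclusion.

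The main technical nuisance is the coupled constraint $s + s^T + t I_{r,n} t^T = 0$ defining $B(t)$, which prevents $s$ and $t$ from varying independently in $U$. This is benign for the argument, since the steps above only need $t$ to range freely over $\mbox{Mat}_{m, r+n}(\mathbb{R})$: the $s^T l_4$ contribution either vanishes when $l_4 = 0$ (the second case), or sits in a block where $\mathcal{V}$ already contains arbitrary vectors by the time it becomes relevant (the first case); a compatible $s$, e.g.\ $s = -\tfrac{1}{2} t I_{r,n} t^T$, can then be chosen for each $t$.
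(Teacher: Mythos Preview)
Your proof is correct and follows essentially the same approach as the paper: both apply Lemma~\ref{lem:D invarient subspaces} and then refine each alternative using $U$-invariance, with the paper's assertion $\langle U\mathcal{L}_m\rangle = J_m\mathcal{L}_0$ corresponding to your explicit computation that $u(0,0,l_3,0)-(0,0,l_3,0)=(-tI_{r,n}l_3,0,0,0)$ forces $V_1\subseteq\mathcal{V}$. Your coordinate decomposition $V_1\oplus V_2\oplus V_3\oplus V_4$ and the explicit formula for $u^T$ make the argument more transparent than the paper's version; the only minor quibble is that in your final paragraph the phrase ``sits in a block where $\mathcal{V}$ already contains arbitrary vectors'' is not quite the reason the $s^Tl_4$ term is harmless in the first case---rather, it is simply irrelevant because the contradiction comes solely from the nonzero $V_3$-component $t^Tl_4$.
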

\begin{proof}
First note that if $\mathcal{V}\subseteq\mathcal{L}^{H_{0}^{*}}$,
then it is clear that $\mathcal{V}$ will be $H_{0}^{*}$ invariant.
Suppose that $\mathcal{V}\subseteq\mathcal{L}$ is an $H_{0}^{*}$
invariant linear subspace such that $\mathcal{V}\nsubseteq\mathcal{L}^{H_{0}^{*}}$,
since $D\leq H_{0}^{*}$ it is clear $\mathcal{V}$ will be $D$ invariant,
thus by Lemma \ref{lem:D invarient subspaces} either 
\begin{enumerate}
\item $\mathcal{V}\subseteq\mathcal{L}^{D}$ 
\item $\mathcal{V}=\mathcal{L}_{m}\oplus\mathcal{U}$ where $\mathcal{U}\subseteq\mathcal{L}^{D}$.
\end{enumerate}
If we are in case (1), then since $\mathcal{L}^{D}=\mathcal{L}^{H_{0}^{*}}\oplus\left\langle x_{d-m+1},\ldots,x_{d}\right\rangle $
we can suppose there exists $v\in\mathcal{V}$ such that $v=v_{1}+v_{2}$
for $v_{1}\in\mathcal{L}^{H_{0}^{*}}$ and $v_{2}\in\left\langle x_{d-m+1},\ldots,x_{d}\right\rangle $
such that $v_{2}\neq0$. Then there exists $u\in U<H_{0}^{*}$ such
that $uv_{2}\notin\mathcal{L}^{D}$ and hence $uv_{2}\notin\mathcal{V}$,
but this is a contradiction since $\mathcal{V}$ is supposed to be
$H_{0}^{*}$ invariant. 

If we are in case (2), from the definitions of $U$, $\mathcal{L}_{m}$
and $J_{m}$ we see that $\left\langle U\mathcal{L}_{m}\right\rangle =\left\langle J_{m}\mathcal{L}_{0}\right\rangle $,
then since $U<H_{0}^{*}$ , the fact that $\mathcal{L}_{m}\subseteq\mathcal{V}$
and $\mathcal{V}$ is $H_{0}^{*}$ invariant implies that $\mathcal{V}=J_{m}\mathcal{L}_{0}+\mathcal{U}$
for some $\mathcal{U}\subseteq\mathcal{L}$. Moreover, since $\mathcal{L}=J_{m}\mathcal{L}^{H_{0}^{*}}\oplus J_{m}\mathcal{L}_{0}$
we see that $\mathcal{V}=J_{m}\mathcal{L}_{0}\oplus\left(\mathcal{V}\cap J_{m}\mathcal{L}^{H_{0}^{*}}\right)$
which implies that we are in the second case of the Lemma. 
\end{proof}
For fields $\mathbb{F}_{1}$ and $\mathbb{F}_{2}$ such that $\mathbb{F}_{1}\subseteq\mathbb{F}_{2}$,
the notation $\textrm{Aut}\left(\mathbb{F}_{2}/\mathbb{F}_{1}\right)$
is used to stand for the group of automorphisms of $\mathbb{F}_{2}$
that fix $\mathbb{F}_{1}$. Let $\overline{\mathbb{Q}}$ denote the
algebraic closure of $\mathbb{Q}$. It is a standard fact that $\overline{\mathbb{Q}}^{\textrm{Aut}\left(\mathbb{\overline{\mathbb{Q}}}/\mathbb{Q}\right)}=\mathbb{Q}$
(cf. \cite{MR1102012}, page 30). For the present situation the corresponding
fact for $\textrm{Aut}\left(\mathbb{C}/\mathbb{Q}\right)$ is needed.
This is probably well known, but no good references were found, so
proofs are included. It should be noted that the argument of Lemma
\ref{lem:rat basis} is adapted from that used on page 30 of \cite{MR1102012}. 
\begin{lem}
\label{lem fixed vectors of big galios group}$\mathbb{C}^{\textrm{Aut}\left(\mathbb{C}/\mathbb{Q}\right)}=\mathbb{Q}$.\end{lem}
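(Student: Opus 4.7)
My plan is to prove the two inclusions. The easy direction $\mathbb{Q}\subseteq\mathbb{C}^{\textrm{Aut}(\mathbb{C}/\mathbb{Q})}$ is immediate from the definition, since every element of $\textrm{Aut}(\mathbb{C}/\mathbb{Q})$ fixes $\mathbb{Q}$ pointwise. For the reverse, I would show that for every $\alpha\in\mathbb{C}\setminus\mathbb{Q}$ there exists $\sigma\in\textrm{Aut}(\mathbb{C}/\mathbb{Q})$ with $\sigma(\alpha)\neq\alpha$, and split into two cases according to whether $\alpha$ is algebraic or transcendental over $\mathbb{Q}$.

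In the algebraic case, $\alpha\in\overline{\mathbb{Q}}\setminus\mathbb{Q}$, and the cited fact $\overline{\mathbb{Q}}^{\textrm{Aut}(\overline{\mathbb{Q}}/\mathbb{Q})}=\mathbb{Q}$ (equivalently, the standard observation that the minimal polynomial of $\alpha$ has degree $\geq 2$ and hence another root $\alpha'$, giving a $\mathbb{Q}$-isomorphism $\mathbb{Q}(\alpha)\to\mathbb{Q}(\alpha')$ that extends to $\overline{\mathbb{Q}}$ via Zorn's lemma) supplies a $\tau\in\textrm{Aut}(\overline{\mathbb{Q}}/\mathbb{Q})$ with $\tau(\alpha)\neq\alpha$. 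In the transcendental case, I would extend $\{\alpha\}$ to a transcendence basis $T$ of $\mathbb{C}$ over $\mathbb{Q}$; because $\mathbb{C}$ is uncountable while $\overline{\mathbb{Q}(\alpha)}$ is countable, $T$ contains some $\beta\neq\alpha$. The permutation of $T$ swapping $\alpha$ and $\beta$ and fixing everything else extends uniquely to a $\mathbb{Q}$-automorphism $\tau$ of the purely transcendental field $\mathbb{Q}(T)$, and clearly $\tau(\alpha)=\beta\neq\alpha$.

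In both cases I am left with an automorphism $\tau$ of a subfield $K\subseteq\mathbb{C}$ (with $K=\overline{\mathbb{Q}}$ in the first case and $K=\mathbb{Q}(T)$ in the second) such that $\mathbb{C}$ is an algebraic closure of $K$: in the first case via a transcendence basis $T'$ of $\mathbb{C}$ over $\overline{\mathbb{Q}}$, after first extending $\tau$ to $\overline{\mathbb{Q}}(T')$ by fixing $T'$ pointwise (which is well defined because $T'$ is algebraically independent over $\overline{\mathbb{Q}}$), and in the second case directly because $\mathbb{C}$ is already an algebraic closure of $\mathbb{Q}(T)$. The final step is then to extend this $\tau$ to an automorphism $\sigma$ of $\mathbb{C}$.

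The main obstacle, and the only technical point, is this extension step. I would carry it out by a standard Zorn's lemma argument on the poset of partial extensions $(F,\tau_F)$ with $K\subseteq F\subseteq\mathbb{C}$ and $\tau_F|_K=\tau$; the existence of a maximal element together with the algebraic closedness of $\mathbb{C}$ and the classical fact that embeddings extend through algebraic extensions forces the maximal $F$ to equal $\mathbb{C}$. The resulting $\sigma$ lies in $\textrm{Aut}(\mathbb{C}/\mathbb{Q})$ and satisfies $\sigma(\alpha)\neq\alpha$, completing the proof.
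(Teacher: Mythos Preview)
Your proposal is correct and follows essentially the same approach as the paper: both split into the algebraic and transcendental cases, use the known fact $\overline{\mathbb{Q}}^{\textrm{Aut}(\overline{\mathbb{Q}}/\mathbb{Q})}=\mathbb{Q}$ for the former, build a permutation of a transcendence basis containing $\alpha$ for the latter, and then extend the resulting automorphism of a subfield to all of $\mathbb{C}$. The only presentational difference is that the paper invokes a cited extension theorem for the last step, whereas you sketch the standard Zorn's lemma argument yourself; you are also slightly more explicit than the paper in justifying that the transcendence basis has a second element and in handling the algebraic case by first fixing a transcendence basis of $\mathbb{C}$ over $\overline{\mathbb{Q}}$.
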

\begin{proof}
It is clear that $\mathbb{Q}\subseteq\mathbb{C}^{\textrm{Aut}\left(\mathbb{C}/\mathbb{Q}\right)}$.
By Theorem 7 of \cite{1966} any element of $\textrm{Aut}\left(\overline{\mathbb{Q}}/\mathbb{Q}\right)$
can be extended to an element of $\textrm{Aut}\left(\mathbb{C}/\mathbb{Q}\right)$
and hence $\mathbb{\overline{Q}}\cap\mathbb{C}^{\textrm{Aut}\left(\mathbb{C}/\mathbb{Q}\right)}=\mathbb{Q}$.
With this in mind it suffices to show that for any $x\in\mathbb{C}\setminus\overline{\mathbb{Q}}$
there exists \foreignlanguage{english}{$\sigma\in\textrm{Aut}\left(\mathbb{C}/\mathbb{Q}\right)$}
such that \foreignlanguage{english}{$\sigma\left(x\right)\neq x$}.
Let $x\in\mathbb{C}\setminus\overline{\mathbb{Q}}$, there exists
a transcendence basis $S$ for $\mathbb{C}/\mathbb{Q}$ such that
$x\in S$. There exists an automorphism, $\phi$ of $\mathbb{Q}\left(S\right)$
that acts by permuting elements of $S$. Since $\mathbb{Q}\left(S\right)$
is a subfield of $\mathbb{C}$, again by Theorem 7 of \cite{1966},
$\phi$ can be extended to an automorphism of $\mathbb{C}/\mathbb{Q}$
and we are done. 
\end{proof}
\textcolor{black}{For any }$\sigma\in\textrm{Aut}\left(\mathbb{C}/\mathbb{Q}\right)$
write $\sigma\left(\mathcal{V}\right)$\textcolor{black}{{} to mean
the vector space with the basis obtained by applying $\sigma$ to
all components of all basis vectors of $\mathcal{V}$. }
\begin{lem}
\label{lem:rat basis}Suppose $\mathcal{V}\subseteq\mathbb{C}^{d}$
is such that $\sigma\left(\mathcal{V}\right)=\mathcal{V}$ for all
$\sigma\in\textrm{Aut}\left(\mathbb{C}/\mathbb{Q}\right)$, then $\mathcal{V}$
is defined over $\mathbb{Q}$. \end{lem}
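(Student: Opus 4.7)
The plan is to use the canonical structure of a reduced row echelon form (RREF) to produce a distinguished basis of $\mathcal{V}$ whose entries must be Galois-invariant. Concretely, let $k=\dim\mathcal{V}$, pick any basis $v_1,\ldots,v_k$ of $\mathcal{V}$, and assemble it as the rows of a $k\times d$ matrix over $\mathbb{C}$. Row reduction produces a unique matrix in RREF whose rows $w_1,\ldots,w_k$ span $\mathcal{V}$; crucially, this RREF basis depends only on the subspace $\mathcal{V}$ and not on the initial choice of $v_1,\ldots,v_k$.

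Now fix any $\sigma\in\textrm{Aut}(\mathbb{C}/\mathbb{Q})$ and apply $\sigma$ entry-wise to the $w_i$. Because $\sigma$ acts as a field automorphism on $\mathbb{C}$, the componentwise action on $\mathbb{C}^d$ is $\mathbb{Q}$-linear and carries the $\mathbb{C}$-span of the $w_i$ onto the $\mathbb{C}$-span of the $\sigma(w_i)$, i.e. onto $\sigma(\mathcal{V})$. By hypothesis $\sigma(\mathcal{V})=\mathcal{V}$, so the vectors $\sigma(w_1),\ldots,\sigma(w_k)$ form a basis of $\mathcal{V}$. Moreover, the defining features of RREF (pivot columns equal to standard basis vectors, leading ones in strictly increasing positions, zeros above and below each pivot) are all fixed by $\sigma$ since the pivot entries are $1\in\mathbb{Q}$ and the non-pivot zero entries are $0\in\mathbb{Q}$. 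Thus $\sigma(w_1),\ldots,\sigma(w_k)$ is another RREF basis of $\mathcal{V}$, so by uniqueness of RREF we must have $\sigma(w_i)=w_i$ for every $i$.

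Since this holds for every $\sigma\in\textrm{Aut}(\mathbb{C}/\mathbb{Q})$, every coordinate of every $w_i$ lies in $\mathbb{C}^{\textrm{Aut}(\mathbb{C}/\mathbb{Q})}$, which equals $\mathbb{Q}$ by Lemma \ref{lem fixed vectors of big galios group}. Hence $w_1,\ldots,w_k$ is a $\mathbb{Q}$-rational basis of $\mathcal{V}$, so $\mathcal{V}$ is defined over $\mathbb{Q}$.

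There is no real obstacle here: the only thing to check carefully is that the $\sigma$-images of the RREF rows span precisely $\sigma(\mathcal{V})$ (so that the hypothesis $\sigma(\mathcal{V})=\mathcal{V}$ is applicable to the unique canonical basis), and that RREF is genuinely invariant as a \emph{form} under a field automorphism applied entry-wise; both follow immediately from $\sigma(0)=0$, $\sigma(1)=1$ and $\mathbb{Q}$-linearity of the componentwise $\sigma$-action. The key conceptual ingredient provided by the previous lemma is the identification $\mathbb{C}^{\textrm{Aut}(\mathbb{C}/\mathbb{Q})}=\mathbb{Q}$, which is what upgrades ``Galois-invariant entries'' to ``rational entries''.
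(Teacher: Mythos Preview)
Your proof is correct and takes a genuinely different route from the paper. The paper argues by contradiction via a minimal-support descent: assuming $\mathcal{V}$ is not defined over $\mathbb{Q}$, it picks a nonzero $v\in\mathcal{V}$ supported on the fewest standard basis vectors, normalises a coordinate to $1$, finds a non-rational entry $v_2$, and then uses $\sigma$ with $\sigma(v_2)\neq v_2$ to produce $v-\sigma(v)\in\mathcal{V}$ with strictly smaller support, a contradiction. Your argument instead exploits the uniqueness of the reduced row echelon form as a canonical, automorphism-equivariant basis of $\mathcal{V}$: since $\sigma$ fixes the RREF shape and $\sigma(\mathcal{V})=\mathcal{V}$, uniqueness forces $\sigma(w_i)=w_i$, whence rationality via Lemma~\ref{lem fixed vectors of big galios group}. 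Both proofs ultimately hinge on that lemma; yours is more direct and constructive (it actually exhibits the rational basis), while the paper's descent is the classical Galois-descent trick and avoids any appeal to linear-algebra normal forms.
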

\begin{proof}
Suppose that $\mathcal{V}$ is not defined over $\mathbb{Q}$. In
particular this means that $\mathcal{V}$ does not contain any vectors
of the form $\lambda q$ where $\lambda\in\mathbb{C}$ and $q\in\mathbb{Q}^{d}$,
in particular $\mathcal{V}\neq0$. Since $\mathcal{V}\neq0$ we can
choose $v\in\mathcal{V}$ such that $v$ is a linear combination of
the least possible number of the $e_{j}$'s. After renumbering the
$e_{j}'s$ and multiplying $v$ by an element of $\mathbb{C}$ we
can suppose that $v=e_{1}+v_{2}e_{2}+\dots$, with $v_{2}\notin\mathbb{Q}$
since otherwise $v\in\mathbb{Q}^{d}$, contradicting the fact that
$\mathcal{V}$ is not defined over $\mathbb{Q}$. Then Lemma \ref{lem fixed vectors of big galios group}
implies that there exists $\sigma\in\textrm{Aut}\left(\mathbb{C}/\mathbb{Q}\right)$
such that $\sigma\left(v_{2}\right)\neq v_{2}$ and hence, because
$\sigma\left(\mathcal{V}\right)=\mathcal{V}$ for all $\sigma\in\textrm{Aut}\left(\mathbb{C}/\mathbb{Q}\right)$
we get that $v-\sigma\left(v\right)\in\mathcal{V}$ and $v-\sigma\left(v\right)\neq0$.
It is also clear that $v-\sigma\left(v\right)$ can be written as
linear combination of fewer $e_{i}$'s than $v$ contradicting our
choice of $v$. 
\end{proof}
During the proof of the following Lemma, the assumption that $d>2s$
becomes essential. 
\begin{lem}
\label{lem:no invariant subspaces}Let $F$ be a closed connected
subgroup such that $H_{Q,M}^{*}\leq F\leq SL_{d}\left(\mathbb{R}\right)$
and $\overline{F\cap SL_{d}\left(\mathbb{Q}\right)}=F$. If $\mathcal{V}\subseteq\mathcal{L}$
is a non trivial $F$ invariant subspace, then $\dim\left(\mathcal{V}\right)\geq d-s$. \end{lem}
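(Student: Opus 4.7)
The plan is to combine the $H_0^*$-invariant subspace classification from Lemma \ref{lem:general h hat star inv subspaces} with a Galois-averaging argument driven by Lemma \ref{lem:rat basis} and Corollary \ref{cor:rationality contrad}. Fix a non-trivial $F$-invariant subspace $\mathcal{V}\subseteq\mathcal{L}$; the aim is to show $\dim\mathcal{V}\geq d-s$. The crucial preliminary step is to show that for every $\sigma\in\textrm{Aut}(\mathbb{C}/\mathbb{Q})$ the subspace $\sigma(\mathcal{V})$ is again $F$-invariant. To see this, take any $\gamma\in F\cap SL_d(\mathbb{Q})$; since $\gamma\mathcal{V}=\mathcal{V}$ and $\sigma$ fixes $\gamma$ coefficientwise, applying $\sigma$ to the relation yields $\gamma\sigma(\mathcal{V})=\sigma(\mathcal{V})$. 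Thus $\sigma(\mathcal{V})$ is stabilised by the whole of $F\cap SL_d(\mathbb{Q})$, and the stabiliser of $\sigma(\mathcal{V})$ in $SL_d(\mathbb{R})$ is closed, so it contains $\overline{F\cap SL_d(\mathbb{Q})}=F$.

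Because $H_{Q,M}^*\leq F$ and $H_{Q,M}^*=g_d H_0^* g_d^{-1}$, the subspace $g_d^T\sigma(\mathcal{V})$ is $H_0^*$-invariant for every $\sigma$. Lemma \ref{lem:general h hat star inv subspaces} then presents two alternatives: either $g_d^T\sigma(\mathcal{V})\subseteq\mathcal{L}^{H_0^*}$, equivalently $\sigma(\mathcal{V})\subseteq g_d^{-T}\mathcal{L}^{H_0^*}=\mathcal{L}^{H_{Q,M}^*}$; or $g_d^T\sigma(\mathcal{V})$ contains the subspace $J_m\mathcal{L}_0$ of dimension $d-s$. In the latter case $\dim\mathcal{V}=\dim\sigma(\mathcal{V})\geq d-s$, and the lemma is proved.

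Assume instead that the first alternative holds for every $\sigma$, and set $\mathcal{W}:=\sum_{\sigma\in\textrm{Aut}(\mathbb{C}/\mathbb{Q})}\sigma(\mathcal{V})$, a finite sum since $\mathcal{L}$ is finite-dimensional. Then $\mathcal{W}$ is a non-zero subspace of $\mathcal{L}^{H_{Q,M}^*}$, and by construction $\tau(\mathcal{W})=\mathcal{W}$ for every $\tau\in\textrm{Aut}(\mathbb{C}/\mathbb{Q})$, so Lemma \ref{lem:rat basis} shows $\mathcal{W}$ is defined over $\mathbb{Q}$. But Corollary \ref{cor:rationality contrad} forbids any non-zero $\mathbb{Q}$-defined subspace of $\mathcal{L}^{H_{Q,M}^*}$; this contradiction rules out the first alternative and completes the proof.

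The main delicate point is the Galois-equivariance step: it must be established for arbitrary (possibly wild) automorphisms, and the density hypothesis $\overline{F\cap SL_d(\mathbb{Q})}=F$ is exactly the tool that transfers invariance from the $\sigma$-fixed rational points to all of $F$. The hypothesis $d>2s$ enters in a complementary way: by Lemma \ref{lem:Forms fixed by H star}, $\dim\mathcal{L}^{H_{Q,M}^*}=s$, and $d>2s$ forces $s<d-s$, so the first alternative really does impose a strictly smaller dimension on $\mathcal{V}$ than the target, ensuring that the Galois averaging step has to be invoked and is doing real work in eliminating that case.
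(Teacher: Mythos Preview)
Your proof is correct and follows the same strategy as the paper: use the density hypothesis to show each $\sigma(\mathcal{V})$ is $F$-invariant, apply Lemma~\ref{lem:general h hat star inv subspaces} to the transported subspace, and manufacture a $\mathbb{Q}$-defined subspace of $\mathcal{L}^{H_{Q,M}^*}$ contradicting Corollary~\ref{cor:rationality contrad}. The only difference is in the endgame. The paper first reduces to the case $\mathcal{V}\subseteq\mathcal{L}^{H_{Q,M}^*}$, replaces $\mathcal{V}$ by the \emph{maximal} $F$-invariant subspace there, and then uses the inequality $\dim\sigma(\mathcal{V})\le s<d-s$ to force every Galois conjugate back into $\mathcal{L}^{H_{Q,M}^*}$, so that maximality yields $\sigma(\mathcal{V})=\mathcal{V}$ directly. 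You instead apply the dichotomy to each $\sigma(\mathcal{V})$ separately and, when all of them land in $\mathcal{L}^{H_{Q,M}^*}$, form the Galois-stable sum $\mathcal{W}=\sum_\sigma\sigma(\mathcal{V})$. Your packaging is slightly cleaner and, notably, never actually invokes the comparison $s<d-s$: despite the paper's remark that the hypothesis $d>2s$ ``becomes essential'' in this lemma, your argument shows it is not needed at this particular step (your closing paragraph about $d>2s$ is commentary rather than a logical ingredient).
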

\begin{proof}
Let $\mathcal{V}\subseteq\mathcal{L}$ be a non trivial $F$ invariant
subspace. Since $H_{Q,M}^{*}$ is a subgroup of $F$, $\mathcal{V}$
is also $H_{Q,M}^{*}$ invariant. This means that then $g_{d}^{-1}\mathcal{V}$
is a $H_{0}^{*}$ invariant subspace, hence Lemma \ref{lem:general h hat star inv subspaces}
implies that $g_{d}^{-1}\mathcal{V}\subseteq\mathcal{L}^{H_{0}^{*}}$
or $g_{d}^{-1}\mathcal{V}=J_{m}\mathcal{L}_{0}\oplus\mathcal{U}$
where $\mathcal{U}\subseteq J_{m}\mathcal{L}^{H_{0}^{*}}$. In the
second case $\dim\left(\mathcal{V}\right)\geq\dim\left(J_{m}\mathcal{L}_{0}\oplus\mathcal{U}\right)\geq\dim\left(\mathcal{L}_{0}\right)=d-s$,
which is the conclusion of the Lemma. Therefore, it is sufficient
to show that there are no non trivial $F$ invariant subspaces contained
in $g_{d}\mathcal{L}^{H_{0}^{*}}=\mathcal{L}^{H_{Q,M}^{*}}$. Suppose
for a contradiction there exists at least one non trivial $F$ invariant
subspace contained in $\mathcal{L}^{H_{Q,M}^{*}}$. Define $\mathcal{V}$
to be the unique maximal $F$ invariant subspace such that $\mathcal{V}\subseteq\mathcal{L}^{H_{Q,M}^{*}}$.
\textcolor{black}{For any }$\sigma\in\textrm{Aut}\left(\mathbb{C}/\mathbb{Q}\right)$
we have that $\sigma\left(\mathcal{V}\right)$ is $F\cap SL_{d}\left(\mathbb{Q}\right)$
invariant and by our assumption on $F$ we have $\overline{F\cap SL_{d}\left(\mathbb{Q}\right)}=F$,
and therefore $\sigma\left(\mathcal{V}\right)$ is $F$ invariant.
Additionally, this means that $\sigma\left(\mathcal{V}\right)$ is
$H_{Q,M}^{*}$ invariant or equivalently $g_{d}^{-1}\sigma\left(\mathcal{V}\right)$
is $H_{0}^{*}$ invariant, hence Lemma \ref{lem:general h hat star inv subspaces}
implies, either $g_{d}^{-1}\sigma\left(\mathcal{V}\right)\subseteq\mathcal{L}^{H_{0}^{*}}$,
or $g_{d}^{-1}\sigma\left(\mathcal{V}\right)=J_{m}\mathcal{L}_{0}\oplus\mathcal{U}$
where $\mathcal{U}\subseteq J_{m}\mathcal{L}^{H_{0}^{*}}$. But $\dim\left(g_{d}^{-1}\sigma\left(\mathcal{\mathcal{V}}\right)\right)\leq s<d-s\leq\dim\left(J_{m}\mathcal{L}_{0}\oplus\mathcal{U}\right)$,
this means that $g_{d}^{-1}\sigma\left(\mathcal{\mathcal{V}}\right)\subseteq\mathcal{L}^{H_{0}^{*}}$
or in other words $\sigma\left(\mathcal{\mathcal{V}}\right)\subseteq\mathcal{L}^{H_{Q,M}^{*}}$.
From the definition of $\mathcal{V}$, we see that $\sigma\left(\mathcal{V}\right)\subseteq\mathcal{V}$
and therefore by considering dimensions $\sigma\left(\mathcal{V}\right)=\mathcal{V}$.
Hence Lemma \ref{lem:rat basis} implies that $\mathcal{V}$ is defined
over $\mathbb{Q}$, contradicting Corollary \ref{cor:rationality contrad}. 
\end{proof}

\section{Proof of the main Theorem.}

To prove Theorem \ref{conj:multilinear} it suffices to prove the
following. 
\begin{prop}
\label{cor:hopefully teh eend}If $F$ is a closed connected subgroup
such that $H_{Q,M}^{*}\leq F\leq G_{Q}$ and $F$ has no non trivial
invariant subspaces of dimension less than $d-s$. Then $F=G_{Q}$. 
\end{prop}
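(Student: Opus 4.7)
The plan is to first use the invariant-subspace hypothesis to show that $F$ acts irreducibly on $\mathbb{C}^{d}$, and then combine this with the structure of $H_{Q,M}^{*}=UD$ inside $G_{Q}$ to force $F=G_{Q}$ via a Lie algebra argument.

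\emph{Irreducibility.} Suppose $V\subsetneq\mathbb{C}^{d}$ is a non-trivial proper $F$-invariant subspace. Because $F\leq G_{Q}$ preserves $Q$, the $Q$-orthogonal complement $V^{\perp}$ is $F$-invariant. By the hypothesis (applied to $V$ and to $V^{\perp}$, after transferring it through the pairing $(l,x)\mapsto l\cdot x$ from invariant subspaces of $\mathcal{L}$ to those of $\mathbb{C}^{d}$), both $\dim V$ and $\dim V^{\perp}$ are at least $d-s$. If $V\cap V^{\perp}=\{0\}$ then $\dim V+\dim V^{\perp}=d$, giving $2(d-s)\leq d$, hence $d\leq2s$, contradicting condition \ref{enu:condition 1} of Theorem \ref{conj:multilinear}. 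Otherwise $V\cap V^{\perp}$ is a non-trivial $F$-invariant totally isotropic subspace, of dimension at most $\lfloor d/2\rfloor<d-s$, again contradicting the hypothesis. Hence $F$ acts irreducibly on $\mathbb{C}^{d}$.

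\emph{Semisimplicity and $D$-decomposition.} Since $F$ acts irreducibly on $\mathbb{C}^{d}$ and preserves the non-degenerate form $Q$, it is reductive; as $G_{Q}$ has finite centre, $F$ must in fact be semisimple. Denote by $\mathfrak{f}$, $\mathfrak{g}_{Q}$, $\mathfrak{d}$ and $\mathfrak{u}$ the Lie algebras of $F$, $G_{Q}$, $D$ and $U$ respectively, so $\mathfrak{d}+\mathfrak{u}\subseteq\mathfrak{f}\subseteq\mathfrak{g}_{Q}$ and $\mathfrak{f}$ is $\mathrm{Ad}(D)$-invariant. Using the canonical decomposition $\mathbb{R}^{d}=V_{1}\oplus V_{2}\oplus V_{3}\oplus V_{4}$ provided by Lemma \ref{canonical}, where $D\cong SO(r,n)^{o}$ acts by its standard representation on $V_{3}$ and trivially on the other $V_{i}$, one reads off an explicit $D$-isotypic decomposition of $\mathfrak{g}_{Q}$. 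The subalgebra $\mathfrak{f}$ is then a direct sum of some of these isotypic components.

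\emph{Conclusion and main obstacle.} It remains to argue that no $D$-isotypic component of $\mathfrak{g}_{Q}$ can be missing from $\mathfrak{f}$: if $W\subset\mathfrak{g}_{Q}$ were such a missing component, then the $\mathfrak{f}$-span of a suitably chosen vector in $\mathbb{R}^{d}$ would produce a proper $F$-invariant subspace, contradicting the irreducibility just established. The main obstacle is precisely this book-keeping step: enumerating the $D$-isotypic pieces of $\mathfrak{g}_{Q}$ from the canonical form of Section \ref{sub:Definition-of H}, identifying which are already present in $\mathfrak{d}+\mathfrak{u}$, and verifying that the absence of any remaining piece would force a small invariant subspace of $\mathbb{R}^{d}$. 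A few low-dimensional cases in which $\mathfrak{g}_{Q}$ fails to be simple may require separate direct inspection. Once $\mathfrak{f}=\mathfrak{g}_{Q}$ is established, the connectedness of $F$ gives $F=G_{Q}$ as required.
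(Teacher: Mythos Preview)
Your irreducibility argument is correct and is a clean observation not made explicit in the paper: the hypothesis, the inequality $d>2s$, and the $Q$-orthogonal complement trick together force $F$ to act irreducibly on $\mathbb{C}^d$.

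The remainder of your sketch, however, has a genuine gap. The claim that ``$\mathfrak{f}$ is then a direct sum of some of these isotypic components'' is false as stated: the standard representation of $D\cong SO(r,n)^o$ occurs in $\mathfrak{g}_Q$ with multiplicity $s+m>1$ (and the trivial representation with much higher multiplicity), so $\mathfrak{f}$ can and typically will meet an isotypic component in a proper $D$-submodule. This is not a minor technicality but exactly the phenomenon the paper's Lemma~\ref{lem:f=00003Dso(p,q) general} is built to handle. There one first uses the invariant-subspace hypothesis to show that $\mathfrak{f}$ meets a certain sum $\mathfrak{v}\oplus\mathfrak{u}^+$ of standard-type pieces non-trivially, then invokes Schur's Lemma to see that any irreducible $D$-submodule $\mathfrak{w}$ of this intersection is \emph{diagonally embedded} (its projections to the various copies of the standard representation are scalar multiples of one another), and finally performs explicit conjugations inside $G_Q$ to straighten this diagonal and enlarge the known subgroup $(UD)_{i_1,i_2,i_3}$ sitting inside a conjugate of $F$. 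Iterating this step is the bulk of the paper's argument.

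Your proposed endgame---``the absence of any remaining piece would force a small invariant subspace''---therefore understates the difficulty. Irreducibility alone does not pin down $F$ among connected subgroups of $G_Q$ (orthogonal groups have proper irreducible connected subgroups), so one must exploit the specific structure of $H_{Q,M}^*$; and because of the multiplicities, the ``remaining pieces'' are not a finite list of isotypic components to be ticked off but continuous families of $D$-submodules, each of which has to be ruled out or absorbed. What you describe as book-keeping is in fact the substance of the proof.
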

Once this knowledge is available, the results of the previous section
and general facts about algebraic groups can be used together with
Ratner's Theorem to obtain the relation $\overline{H_{Q,M}^{*}x}=G_{Q}x=G_{Q}$
for $x=e\Gamma_{Q}$. It is straightforward to show that the latter
relation implies Theorem \ref{conj:multilinear}. The main body to
this section is devoted to a proof of Proposition \prettyref{cor:hopefully teh eend},
in the course of this proof some cumbersome notation is used, possibly
obscuring the underlying idea, therefore an outline of the proof is
presented as follows. 

The main step is to show that there is some conjugate of $F$ that
contains an enlarged copy of $H_{Q,M}^{*}$. Once this is proven,
the procedure can be repeated, and at each stage there is some conjugate
of $F$ that contains a larger subgroup of the same form as $H_{Q,M}^{*}$.
In order to prove the former claim, presented as Lemma \ref{lem:f=00003Dso(p,q) general}
below, the Lie algebra of $F$ is decomposed into subspaces defined
in terms of $4$ by $4$ block matrices. Then it is shown that, if
the intersection of these subspaces with the Lie algebra of $F$ is
trivial in certain cases, then $F$ will have invariant subspaces
of dimension less than $d-s$, contradicting the assumptions on $F$.
This means that the intersection of these subspaces with the Lie algebra
of $F$ is non trivial and by conjugating we can rearrange the subspaces,
contained in the intersection, in a form that shows $F$ must contain
an enlarged copy of $H_{Q,M}^{*}$. 

We now proceed with the actual proof. 
\begin{lem}
\label{lem:f=00003Dso(p,q) general}Let the parameters $p',q',r,n$
and $m$ be as defined in Lemma \ref{canonical} and let $i_{1}$,$i_{2}$
and $i_{3}$ be integers such that $0\leq i_{1}\leq p'$, $0\leq i_{2}\leq q'$
and $-\min\left\{ p',q'\right\} \leq i_{3}\leq m$ and $i_{1}+i_{2}+i_{3}<p'+q'+m$.
Let $Q_{i_{1},i_{2},i_{3}}$ be the quadratic form defined by the
matrix
\[
Q'{}_{i_{1},i_{2},i_{3}}=\left(\begin{array}{cccc}
0 & 0 & 0 & I_{m-i_{3}}\\
0 & I_{p'-i_{1},q'-i_{2}} & 0 & 0\\
0 & 0 & I_{r+i_{1}+i_{3},n+i_{2}+i_{3}} & 0\\
I_{m-i_{3}} & 0 & 0 & 0
\end{array}\right).
\]
Let $F$ be a closed connected subgroup such that $\left(UD\right){}_{i_{1},i_{2},i_{3}}\leq F\leq G_{Q_{i_{1},i_{2},i_{3}}}$
and $F$ has no non trivial invariant subspaces of dimension less
than $d-s$. Then there exists $\eta\in GL_{d}\left(\mathbb{R}\right)$
with the properties that either:
\begin{enumerate}
\item $\left(UD\right){}_{i_{1}+1,i_{2},i_{3}}\leq\eta F\eta^{-1}\leq G_{Q_{i_{1}+1,i_{2},i_{3}}}$,
\item $\left(UD\right){}_{i_{1},i_{2}+1,i_{3}}\leq\eta F\eta^{-1}\leq G_{Q_{i_{1},i_{2}+1,i_{3}}}$,
\item $\left(UD\right){}_{i_{1},i_{2},i_{3}+1}\leq\eta F\eta^{-1}\leq G_{Q_{i_{1},i_{2},i_{3}+1}}$,
\selectlanguage{english}%
\item \textup{$\left(UD\right){}_{i_{1}+1,i_{2}+1,i_{3}-1}\leq\eta F\eta^{-1}\leq G_{Q_{i_{1}+1,i_{2}+1,i_{3}-1}}.$}
\end{enumerate}
Moreover, if $i_{1}=p'$ then only cases 2 and 3 can occur, if $i_{2}=q'$
then only cases 1 and 3 occur, if $i_{3}=m$ then case 3 will not
occur and if $i_{3}=-\min\left\{ p',q'\right\} $ then case 4 will
not occur. \end{lem}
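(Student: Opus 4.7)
The plan is to work at the Lie algebra level. Set $\mathfrak{f}=\operatorname{Lie}(F)$ and $\mathfrak{g}=\operatorname{Lie}(G_{Q_{i_{1},i_{2},i_{3}}})$, and write every element $Y\in\mathfrak{g}$ as a $4\times 4$ block matrix $(Y_{ab})$ using the partition $(m-i_{3},\,p'+q'-i_{1}-i_{2},\,r+n+i_{1}+i_{2}+2i_{3},\,m-i_{3})$ of $\{1,\dots,d\}$ dictated by $Q'_{i_{1},i_{2},i_{3}}$. The defining relation $Y^{T}Q'_{i_{1},i_{2},i_{3}}+Q'_{i_{1},i_{2},i_{3}}Y=0$ then becomes explicit duality constraints pairing block $(a,b)$ with block $(b^{\ast},a^{\ast})$, where $1^{\ast}=4$, $4^{\ast}=1$, $2^{\ast}=2$, $3^{\ast}=3$, each weighted by the relevant signature matrix. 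In this notation, $\operatorname{Lie}(D_{i_{1},i_{2},i_{3}})\cong\mathfrak{so}(r+i_{1}+i_{3},n+i_{2}+i_{3})$ is carried by the $(3,3)$ block, and $\operatorname{Lie}(U_{i_{1},i_{2},i_{3}})$ is carried by the $(3,1)$, $(4,3)$, $(4,1)$ blocks, with the first two linked by the duality and the third constrained to be skew-symmetric.

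Next, I would use the invariant-subspace hypothesis to force new elements into $\mathfrak{f}$ beyond $\operatorname{Lie}(U_{i_{1},i_{2},i_{3}})+\operatorname{Lie}(D_{i_{1},i_{2},i_{3}})$. The subspace spanned by the last $m-i_{3}$ standard basis vectors is $(UD)_{i_{1},i_{2},i_{3}}$-invariant of dimension at most $m$, and other small subspaces (for instance the block-$2$ fixed directions) are likewise $(UD)_{i_{1},i_{2},i_{3}}$-stable. Since $d-s=m+r+n$ and $r,n\ge 1$, all such subspaces have dimension strictly less than $d-s$; by hypothesis none of them can be $F$-invariant. This forces $\mathfrak{f}$ to contain elements with a non-zero block outside the $\{(3,3),(3,1),(4,3),(4,1)\}$ pattern.

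Third, I would decompose $\mathfrak{f}$ as a $D_{i_{1},i_{2},i_{3}}$-module. Because $D_{i_{1},i_{2},i_{3}}$ is semisimple (its signature satisfies $r+i_{1}+i_{3},n+i_{2}+i_{3}\ge 1$ with total rank at least $3$), and because $D_{i_{1},i_{2},i_{3}}$ acts trivially on blocks not indexed by $3$, via the adjoint on block $(3,3)$, and via its standard representation on blocks with exactly one index equal to $3$, the subspace $\mathfrak{f}$ is a direct sum of $D_{i_{1},i_{2},i_{3}}$-isotypic components; one non-zero vector in a given component drags the entire isotypic copy into $\mathfrak{f}$. The case analysis then splits according to which isotypic component the new element lives in. A non-zero component in $Y_{3,2}$ (equivalently $Y_{2,3}$) transfers a $+$ or $-$ isotropic direction from block $2$ into block $3$, giving case 1 or 2 according to sign. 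A non-zero component in $Y_{3,4}$ (equivalently $Y_{1,3}$) merges one direction from each of blocks $1$ and $4$ into block $3$ as a $(+,-)$ pair, giving case 3. A non-zero component of mixed type in $Y_{1,2}$, $Y_{2,4}$ or $Y_{1,4}$ corresponds to a parabolic rearrangement that simultaneously absorbs one $+$ and one $-$ direction of block $2$ into blocks $1$ and $4$ (preserving their isotropic pairing with block $4$, block $1$ respectively), yielding case 4. In each case, the conjugating element $\eta\in GL_{d}(\mathbb{R})$ is constructed as a suitable block rotation that straightens the new direction into the canonical position corresponding to the incremented indices, so that $\eta F\eta^{-1}$ contains $(UD)_{i'_{1},i'_{2},i'_{3}}$ and is contained in $G_{Q_{i'_{1},i'_{2},i'_{3}}}$.

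The main obstacle will be the case analysis itself: verifying that the chosen $\eta$ simultaneously conjugates $Q'_{i_{1},i_{2},i_{3}}$ into $Q'_{i'_{1},i'_{2},i'_{3}}$ while transforming the generators of $(UD)_{i_{1},i_{2},i_{3}}$ together with the newly produced isotypic component into the full set of generators of $(UD)_{i'_{1},i'_{2},i'_{3}}$. This requires some delicate but formal root-space bookkeeping with the explicit structure constants of $\mathfrak{so}(Q'_{i_{1},i_{2},i_{3}})$. The boundary restrictions follow immediately: when $i_{1}=p'$ there is no $+$-direction left in block $2$, so the relevant isotypic component is zero and case 1 cannot occur, and the other boundary assertions are analogous.
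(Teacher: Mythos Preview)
Your overall strategy---work in the Lie algebra, use the $4\times4$ block decomposition, exploit the $D_{i_1,i_2,i_3}$-module structure of $\mathfrak{f}$, and conjugate to straighten out the new directions---is exactly the paper's approach. But two points in your execution are genuine gaps.

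First, the sentence ``one non-zero vector in a given component drags the entire isotypic copy into $\mathfrak{f}$'' is false. Semisimplicity of $D$ only gives that $\mathfrak{f}$ is a direct sum of $D$-irreducibles; an irreducible submodule sitting inside the standard isotypic component (blocks with exactly one index equal to $3$) can be \emph{diagonally embedded} across several copies. Concretely, a $D$-irreducible inside $\mathfrak{v}\oplus\mathfrak{u}^{+}$ is determined by a vector $\gamma\in\mathbb{R}^{l+\tau}$ via Schur's lemma, and you only get a single row of each block, not the whole block. The paper handles this by first bracketing with $\mathfrak{d}$ and $\mathfrak{u}^{-}$ to force a nonzero $D$-irreducible $\mathfrak{w}\subset(\mathfrak{v}\oplus\mathfrak{u}^{+})\cap\mathfrak{f}$, then conjugating by an element of $SO(l)\times O(\tau_1,\tau_2)$ (which normalises $(UD)_{i_1,i_2,i_3}$) to move $\gamma$ onto at most three coordinates: one in the $\mathfrak{u}^{+}$ part and one in each signature piece of the $\mathfrak{v}$ part.

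Second, your assignment of case~4 to the blocks $Y_{1,2},Y_{2,4},Y_{1,4}$ is wrong. Those blocks are $D$-trivial, and in the paper they are never used directly: if the new element lands there, one brackets with $\mathfrak{u}^{-}$ to push it into $\mathfrak{v}\oplus\mathfrak{u}^{+}$. Case~4 actually arises \emph{inside} $\mathfrak{v}$ (block $(2,3)$), when after the conjugation above the diagonal embedding has $\gamma$ supported on one positive and one negative coordinate of block~2 with $\gamma_{l+1}^{2}=\gamma_{l+\tau}^{2}$ (the isotropic case for the $O(\tau_1,\tau_2)$-action). If the ratio is not $\pm1$ one can rotate further to land in case~1 or~2; if it is $\pm1$ a hyperbolic change of basis converts the pair into a new isotropic $(1,4)$-type pair, giving the index shift $(i_1+1,i_2+1,i_3-1)$. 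This also explains the boundary restrictions you glossed: when $i_1=p'$ (so $\tau_1=0$) the positive half of block~2 is empty, killing both case~1 and case~4, not just case~1.
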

\begin{proof}
Everything that follows depends on the parameters $i_{1},i_{2}$ and
$i_{3}$, but in order to make the notation more digestible let $m-i_{3}=l$,
$r+i_{1}+i_{3}=\sigma_{1}$, $n+i_{2}+i_{3}=\sigma_{2}$, $p'-i_{1}=\tau_{1}$
, $q'-i_{2}=\tau_{2}$ , $\tau_{1}+\tau_{2}=\tau$ and $\sigma_{1}+\sigma_{2}=\sigma$.
Let 
\[
f=\left(\begin{array}{cccc}
f_{11} & f_{12} & f_{13} & f_{14}\\
f_{21} & f_{22} & f_{23} & f_{24}\\
f_{31} & f_{32} & f_{33} & f_{34}\\
f_{41} & f_{42} & f_{43} & f_{44}
\end{array}\right)
\]
where $f_{11},f_{14},f_{41},f_{44}\in\mbox{Mat}_{l}\left(\mathbb{R}\right)$,
$f_{21}^{T},f_{12},f_{24}^{T},f_{42}\in\mbox{Mat}_{l,\tau}\left(\mathbb{R}\right)$,
$f_{22}\in\mbox{Mat}_{\tau}\left(\mathbb{R}\right)$, $f_{33}\in\mbox{Mat}_{\sigma}$$\left(\mathbb{R}\right)$,
$f_{23},f_{32}^{T}\in\mbox{Mat}_{\tau,\sigma}\left(\mathbb{R}\right)$
and $f_{13},f_{34}^{T},f_{43},f_{31}^{T}\in\mbox{Mat}_{l,\sigma}\left(\mathbb{R}\right)$.
Let $\mathfrak{f}$ be the Lie algebra of $F$. For any quadratic
form, $\mathcal{Q}$, let $\mathfrak{so}\left(\mathcal{Q}\right)$
be the Lie algebra of $G_{\mathcal{Q}}$ and for non negative integers
$z_{1}$ and $z_{2}$ let $\mathfrak{so}\left(z_{1},z_{2}\right)$
be the Lie algebra of $SO\left(z_{1},z_{2}\right)$. Since $\mathfrak{f}$
is a subalgebra of $\mathfrak{so}\left(Q{}_{i_{1},i_{2},i_{3}}\right)$
we have that any $f\in\mathfrak{f}$ must satisfy the relation $f^{T}Q'{}_{i_{1},i_{2},i_{3}}+Q'{}_{i_{1},i_{2},i_{3}}f=0$.
Carrying out this computation yields the following, $f_{14},f_{41}\in\mathfrak{so}\left(l\right)$,
$f_{22}\in\mathfrak{so}\left(\tau_{1},\tau_{2}\right)$, $f_{33}\in\mathfrak{so}\left(\sigma_{1},\sigma_{2}\right)$,
$I_{\tau_{1},\tau_{2}}f_{21}=-f_{42}^{T}$, $I_{\sigma_{1},\sigma_{2}}f_{31}=-f_{43}^{T}$,
$f_{11}=-f_{44}^{T}$, $I_{\sigma_{1},\sigma_{2}}f_{32}=-f_{23}^{T}I_{\tau_{1},\tau_{2}}$,
$f_{12}=-f_{24}^{T}I_{\tau_{1},\tau_{2}}$ and $f_{13}=-f_{34}^{T}I_{\sigma_{1},\sigma_{2}}$.
Considering these relations, define the following subspaces,

$\mathfrak{v}^{+}=\left\{ \left(\begin{array}{cccc}
0 & -v^{T}I_{\tau_{1},\tau_{2}} & 0 & 0\\
0 & 0 & 0 & v\\
0 & 0 & 0 & 0\\
0 & 0 & 0 & 0
\end{array}\right):v\in\mbox{Mat}_{\tau,l}\left(\mathbb{R}\right)\right\} $

$\mathfrak{v}^{-}=\left\{ \left(\begin{array}{cccc}
0 & 0 & 0 & 0\\
v & 0 & 0 & 0\\
0 & 0 & 0 & 0\\
0 & -v^{T}I_{\tau_{1},\tau_{2}} & 0 & 0
\end{array}\right):v\in\mbox{Mat}_{\tau,l}\left(\mathbb{R}\right)\right\} $

$\mathfrak{v}=\left\{ \left(\begin{array}{cccc}
0 & 0 & 0 & 0\\
0 & 0 & v & 0\\
0 & -I_{\sigma_{1},\sigma_{2}}v^{T}I_{\tau_{1},\tau_{2}} & 0 & 0\\
0 & 0 & 0 & 0
\end{array}\right):v\in\mbox{Mat}_{\tau,\sigma}\left(\mathbb{R}\right)\right\} $

$\mathfrak{a}=\left(\begin{array}{cccc}
0 & 0 & 0 & 0\\
0 & \mathfrak{so}\left(\tau_{1},\tau_{2}\right) & 0 & 0\\
0 & 0 & 0 & 0\\
0 & 0 & 0 & 0
\end{array}\right)$

$\mathfrak{d}\left(i_{1},i_{2},i_{3}\right)=\left(\begin{array}{cccc}
0 & 0 & 0 & 0\\
0 & 0 & 0 & 0\\
0 & 0 & \mathfrak{so}\left(\sigma_{1},\sigma_{2}\right) & 0\\
0 & 0 & 0 & 0
\end{array}\right)$

$\mathfrak{c}=\left\{ \left(\begin{array}{cccc}
c & 0 & 0 & 0\\
0 & 0 & 0 & 0\\
0 & 0 & 0 & 0\\
0 & 0 & 0 & -c^{T}
\end{array}\right):c\in\mbox{Mat}_{l}\left(\mathbb{R}\right)\right\} $

$\mathfrak{u}^{-}\left(i_{1},i_{2},i_{3}\right)=\left\{ \left(\begin{array}{cccc}
0 & 0 & 0 & 0\\
0 & 0 & 0 & 0\\
-I_{\sigma_{1},\sigma_{2}}u^{T} & 0 & 0 & 0\\
0 & 0 & u & 0
\end{array}\right):u\in\mbox{Mat}_{l,\sigma}\left(\mathbb{R}\right)\right\} $

$\mathfrak{u}^{+}=\left\{ \left(\begin{array}{cccc}
0 & 0 & u & 0\\
0 & 0 & 0 & 0\\
0 & 0 & 0 & -I_{\sigma_{1},\sigma_{2}}u^{T}\\
0 & 0 & 0 & 0
\end{array}\right):u\in\mbox{Mat}_{l,\sigma}\left(\mathbb{R}\right)\right\} $

$\mathfrak{b}^{+}=\left(\begin{array}{cccc}
0 & 0 & 0 & \mathfrak{so}\left(l\right)\\
0 & 0 & 0 & 0\\
0 & 0 & 0 & 0\\
0 & 0 & 0 & 0
\end{array}\right)$

$\mathfrak{b}^{-}\left(i_{1},i_{2},i_{3}\right)=\left(\begin{array}{cccc}
0 & 0 & 0 & 0\\
0 & 0 & 0 & 0\\
0 & 0 & 0 & 0\\
\mathfrak{so}\left(l\right) & 0 & 0 & 0
\end{array}\right).$

For $0<k\leq\tau$, let

$\mathfrak{v}_{k}=\left\{ \left(\begin{array}{cccc}
0 & 0 & 0 & 0\\
0 & 0 & v & 0\\
0 & -I_{\sigma_{1},\sigma_{2}}v^{T}I_{\tau_{1},\tau_{2}} & 0 & 0\\
0 & 0 & 0 & 0
\end{array}\right):v\in\mbox{Mat}_{\tau,\sigma}\left(\mathbb{R}\right),\: v_{ij}=0\textrm{ for all }i\neq k\right\} $

and for $0<k\leq l$, let

$\mathfrak{u}_{k}^{-}=\left\{ \left(\begin{array}{cccc}
0 & 0 & 0 & 0\\
0 & 0 & 0 & 0\\
-I_{\sigma_{1},\sigma_{2}}u^{T} & 0 & 0 & 0\\
0 & 0 & u & 0
\end{array}\right):u\in\mbox{Mat}_{l,\sigma}\left(\mathbb{R}\right)\: u_{ij}=0\textrm{ for }i\neq k\right\} $

$\mathfrak{u}_{k}^{+}=\left\{ \left(\begin{array}{cccc}
0 & 0 & u & 0\\
0 & 0 & 0 & 0\\
0 & 0 & 0 & -I_{\sigma_{1},\sigma_{2}}u^{T}\\
0 & 0 & 0 & 0
\end{array}\right):u\in\mbox{Mat}_{l,\sigma}\left(\mathbb{R}\right)\: u_{ij}=0\textrm{ for }i\neq k\right\} .$

The dependence on the triple $\left(i_{1},i_{2},i_{3}\right)$ is
only indicated in the cases where it will be necessary later. In order
to simplify the notation, the convention that $\mathfrak{u}^{-}=\mathfrak{u}^{-}\left(i_{1},i_{2},i_{3}\right)$,
$\mathfrak{b}^{-}=\mathfrak{b}^{-}\left(i_{1},i_{2},i_{3}\right)$
and $\mathfrak{d}=\mathfrak{d}\left(i_{1},i_{2},i_{3}\right)$ is
in place. Note that the Lie algebra of $\left(UD\right){}_{i_{1},i_{2},i_{3}}$
is $\mathfrak{b}^{-}\oplus\mathfrak{u}^{-}\oplus\mathfrak{d}$ and
therefore $\mathfrak{u}^{-}\oplus\mathfrak{d}\subseteq\mathfrak{f}$. 

The first step is to show that $\left(\mathfrak{v}\oplus\mathfrak{u}^{+}\right)\cap\mathfrak{f}\neq0$.
Let $\pi_{1}:\mathfrak{f}\rightarrow\mathfrak{u}^{+}$, $\pi_{2}:\mathfrak{f}\rightarrow\mathfrak{v}$,
$\pi_{3}:\mathfrak{f}\rightarrow\mathfrak{b}^{+}$ and $\pi_{4}:\mathfrak{f}\rightarrow\mathfrak{v}^{+}$
be projections. If $\pi_{1}\left(f\right)=\pi_{2}\left(f\right)=\pi_{3}\left(f\right)=\pi_{4}\left(f\right)=0$
for all $f\in\mathfrak{f}$, then $\left\langle x_{1},\dots,x_{l+\tau}\right\rangle $
would be $F$ invariant. Since $\dim\left(\left\langle x_{1},\dots,x_{l+\tau}\right\rangle \right)=l+\tau=m+p'+q'-i_{1}-i_{2}-i_{3}\leq s<d-s$,
the assumption that $F$ has no non trivial invariant subspaces of
dimension less than $d-s$, implies that there exists $f'\in\mathfrak{f}$
such that $\pi_{i}\left(f'\right)\neq0$ for at least one of $1\leq i\leq4$.
Let $\rho:\mathfrak{f}\rightarrow\mathfrak{u}^{-}\oplus\mathfrak{d}$
be a projection. By possibly replacing $f'$ with the element $f'-\left(u+d\right)\in\mathfrak{f}$
for some $u\in\mathfrak{u}^{-}$and $d\in\mathfrak{d}$ we can suppose
that $\rho\left(f'\right)=0$. As before, write $f'$ in block form
and compute the Lie bracket 
\begin{alignat*}{1}
\left[f',\mathfrak{d}\right] & =\left\{ \left(\begin{array}{cccc}
0 & 0 & f_{13}d & 0\\
0 & 0 & f_{23}d & 0\\
0 & -I_{\sigma_{1},\sigma_{2}}d^{T}f_{23}^{T}I_{\tau_{1},\tau_{2}} & 0 & -I_{\sigma_{1},\sigma_{2}}d^{T}f_{13}^{T}\\
0 & 0 & 0 & 0
\end{array}\right):\begin{array}{l}
d\in\mathfrak{so}\left(\sigma_{1},\sigma_{2}\right)\end{array}\right\} \subseteq\left(\mathfrak{v}\oplus\mathfrak{u}^{+}\right)\cap\mathfrak{f}.
\end{alignat*}
Hence, $\left[f',\mathfrak{d}\right]$ is non zero provided that one
of $f_{13}$ or $f_{23}$ is non zero, or equivalently, that $\pi_{1}\left(f'\right)$
or $\pi_{2}\left(f'\right)$ is non zero. Suppose that $\pi_{1}\left(f'\right)=\pi_{2}\left(f'\right)=0$
. By computing the Lie bracket 
\[
\left[f',\mathfrak{u}^{-}\right]=\left\{ \left(\begin{array}{cccc}
0 & 0 & f_{14}u & 0\\
0 & 0 & f_{24}u & 0\\
-I_{\sigma_{1},\sigma_{2}}u^{T}f_{11}^{T} & -I_{\sigma_{1},\sigma_{2}}u^{T}f_{24}^{T}I_{\tau_{1},\tau_{2}} & 0 & -I_{\sigma_{1},\sigma_{2}}u^{T}f_{14}^{T}\\
0 & 0 & f_{11}u & 0
\end{array}\right):\begin{array}{l}
u\in\mbox{Mat}_{l,\sigma}\left(\mathbb{R}\right)\end{array}\right\} 
\]
we see that, so long as at least one of $f_{14}$ or $f_{24}$ is
non zero, or equivalently, $\pi_{3}\left(f'\right)$ or $\pi_{4}\left(f'\right)$
is non zero, there exists $f''\in\left[f',\mathfrak{u}^{-}\right]\subset\mathfrak{f}$,
such that at least one of $\pi_{1}\left(f''\right)$ or $\pi_{2}\left(f''\right)$
is non zero. Thus, we have verified that $\left(\mathfrak{v}\oplus\mathfrak{u}^{+}\right)\cap\mathfrak{f}\neq0$
and hence there is a non trivial subspace contained in $\left(\mathfrak{v}\oplus\mathfrak{u}^{+}\right)\cap\mathfrak{f}$.
Denote this subspace by $\mathfrak{w}$. Since $\left[d,u\right]\in\mathfrak{v}\oplus\mathfrak{u}^{+}$
for all $d\in\mathfrak{d}$ and $u\in\mathfrak{v}\oplus\mathfrak{u}^{+}$
we say that $\mathfrak{v}\oplus\mathfrak{u}^{+}$ is $\mathfrak{\ensuremath{d}}$
invariant for the natural action of $\mathfrak{d}\times\mathfrak{v}\oplus\mathfrak{u}^{+}\rightarrow\mathfrak{v}\oplus\mathfrak{u}^{+}$
given by the Lie bracket. By passing to an irreducible component if
necessary we can assume that $\mathfrak{w}$ is $\mathfrak{d}$ irreducible. 

\textcolor{black}{The next step is to show that $\mathfrak{w}$ is
`diagonally embedded' into $\mathfrak{\mathfrak{\mathfrak{v}}}\oplus\mathfrak{u}^{+}$.
For $1\leq i\leq l$ let $\nu_{i}:\mathfrak{\mathfrak{\mathfrak{v}}}\oplus\mathfrak{u}^{+}\rightarrow\mathfrak{u}_{i}^{+}$
and for $l+1\leq i\leq l+\tau$ let $\nu_{i}:\mathfrak{\mathfrak{\mathfrak{v}}}\oplus\mathfrak{u}^{+}\rightarrow\mathfrak{v}_{i-l}$
be projections. There is a decomposition $\mathfrak{\mathfrak{\mathfrak{v}}}\oplus\mathfrak{u}^{+}=\mathfrak{\mathfrak{v}}_{1}\oplus\ldots\oplus\mathfrak{v}_{\tau}\oplus\mathfrak{u}_{1}^{+}\oplus\ldots\oplus\mathfrak{u}_{l}^{+}$
where it is not possible to split the $\mathfrak{\mathfrak{v}}_{k}$'s
or $\mathfrak{u}_{k}^{+}$'s into further $\mathfrak{d}$ invariant
subalgebra since the action of $\mathfrak{d}$ on $\mathfrak{v}_{k}$
and $\mathfrak{u}_{k}^{+}$ is irreducible for each $k$. Since $\mathfrak{w}$
and $\left\{ \nu_{i}\left(w\right):w\in\mathfrak{w}\right\} $ are
both $\mathfrak{d}$ irreducible subspaces and $\left[d,\nu_{i}\left(w\right)\right]=\nu_{i}\left(\left[d,w\right]\right)$
for all $w\in\mathfrak{w}$ and $d\in\mathfrak{d}$, by Schur's Lemma
for each $1\leq i\leq l+\tau$ either $\mathfrak{w}\cong\left\{ \nu_{i}\left(w\right):w\in\mathfrak{w}\right\} $
or $\nu_{i}\left(w\right)=0$ for all $w\in\mathfrak{w}$. Moreover,
by Schur's Lemma, if for some $1\leq i\leq l+\tau$ we have $\mathfrak{w}\cong\left\{ \nu_{i}\left(w\right):w\in\mathfrak{w}\right\} $
then the isomorphism is given by scalar multiplication. This means
that for all $1\leq i,j\leq l+\tau$ such that $\left\{ \nu_{j}\left(w\right):w\in\mathfrak{w}\right\} \neq0$
there exist constants $c_{i,j}\in\mathbb{R}$ such that $\nu_{i}\left(w\right)=c_{i,j}\nu_{j}\left(w\right)$
for all $w\in\mathfrak{w}$. This is what we mean when we say that
$\mathfrak{w}$ is `diagonally embedded' into $\mathfrak{\mathfrak{\mathfrak{v}}}\oplus\mathfrak{u}^{+}$. }

\textcolor{black}{The next stage of the proof is to show that by conjugating
$\mathfrak{w}$ with a suitable element of $G_{Q_{i_{1},i_{2},i_{3}}}$
we can simplify the embedding of $\mathfrak{w}$ into $\mathfrak{\mathfrak{\mathfrak{v}}}\oplus\mathfrak{u}^{+}$
even further. First we set up some notation. There is an obvious isomorphism,
$\phi:\mathfrak{v}\oplus\mathfrak{u}^{+}\rightarrow\mbox{Mat}_{l+\tau,\sigma}\left(\mathbb{R}\right).$
The fact $\mathfrak{w}$ is diagonally embedded into $\mathfrak{v}\oplus\mathfrak{u}^{+}$
means that $\mathfrak{w}$ can be determined by a vector, $\gamma\in\mathbb{R}^{l+\tau}$
such that 
\[
\phi\left(\mathfrak{w}\right)=\left\{ \left(\begin{array}{l}
\gamma_{1}x\\
\vdots\\
\gamma_{l+\tau}x
\end{array}\right):x\in\mathbb{R}^{\sigma}\right\} .
\]
Let $\phi_{1}$ and $\phi_{2}$ be the corresponding isomorphisms
such that $\phi_{1}:\mathfrak{u}^{+}\rightarrow\mbox{Mat}_{l,\sigma}\left(\mathbb{R}\right)$}
and $\phi_{2}:\mathfrak{v}\rightarrow\mbox{Mat}_{\tau,\sigma}\left(\mathbb{R}\right)$.
Let $a_{1}\in SO\left(l\right)$ and $a_{2}\in O\left(\tau_{1},\tau_{2}\right)$.
Consider the map given by the matrix 
\[
\eta_{1}=\left(\begin{array}{cccc}
a_{1} & 0 & 0 & 0\\
0 & a_{2} & 0 & 0\\
0 & 0 & I_{\sigma} & 0\\
0 & 0 & 0 & a_{1}
\end{array}\right)\in G_{Q_{i_{1},i_{2},i_{3}}},
\]
where in the cases when $l=0$ or $\tau=0$ the map $\eta_{1}$ degenerates
to the obvious one given by $2\times2$ or $3\times3$ block matrices
respectively. Let $\eta_{1}^{-1}\mathfrak{w}\eta_{1}=\mathfrak{w}_{1}$.
One can check that $\eta_{1}^{-1}\left(\mathfrak{v}\oplus\mathfrak{u}^{+}\right)\eta_{1}=\mathfrak{v}\oplus\mathfrak{u}^{+}$
and that 
\[
\phi\left(\mathfrak{w}_{1}\right)=\left(\begin{array}{c}
a_{1}^{-1}\phi_{1}\left(\pi_{1}\left(\mathfrak{w}\right)\right)\\
a_{2}^{-1}\phi_{2}\left(\pi_{2}\left(\mathfrak{w}\right)\right)
\end{array}\right),
\]
where $\pi_{1}$ and $\pi_{2}$ are the projections defined previously.
It is also clear that 
\[
\phi_{1}\left(\pi_{1}\left(\mathfrak{w}\right)\right)=\left\{ \left(\begin{array}{l}
\gamma_{1}x\\
\vdots\\
\gamma_{l}x
\end{array}\right):x\in\mathbb{R}^{\sigma}\right\} \mathrm{\; and\;\;}\phi_{2}\left(\pi_{2}\left(\mathfrak{w}\right)\right)=\left\{ \left(\begin{array}{l}
\gamma_{l+1}x\\
\vdots\\
\gamma_{l+\tau}x
\end{array}\right):x\in\mathbb{R}^{\sigma}\right\} .
\]
Now, $SO\left(l\right)$ acts transitively on the sphere $\sum_{i=1}^{l}x_{i}^{2}=\sum_{i}^{l}\gamma_{i}^{2}$
and $O\left(\tau_{1},\tau_{2}\right)$ acts transitively on the surface
$\sum_{i=1}^{\sigma_{1}}x_{l+\tau+i}^{2}-\sum_{i=\sigma_{1}+1}^{\sigma_{2}}x_{l+\tau+i}^{2}=\sum_{i=1}^{\sigma_{1}}\gamma_{l+\tau+i}^{2}-\sum_{i=\sigma_{1}+1}^{\sigma_{2}}\gamma_{l+\tau+i}^{2}$,
it follows that there exist $a_{1}$ and $a_{2}$ such that $\mathfrak{w}_{1}\subset\mathfrak{u}_{l}^{+}\oplus\mathfrak{\mathfrak{v}}_{1}\oplus\mathfrak{v}_{\tau}$. 

Let $\mu_{1}:\mathfrak{f}\rightarrow\mathfrak{u}_{l}^{+}$ , $\mu_{2}:\mathfrak{f}\rightarrow\mathfrak{\mathfrak{v}}_{1}$
and $\mu_{3}:\mathfrak{f}\rightarrow\mathfrak{\mathfrak{v}}_{\tau}$
be projections. There are now three mutually exclusive cases we must
consider separately. 
\begin{caseenv}
\item There exists $v\in\mathfrak{w}_{1}$ such that $\mu_{1}\left(v\right)\neq0$.
Let $\alpha_{1}$ and $\alpha_{2}$ be real numbers. Because $\mathfrak{w}_{1}$
is diagonally embedded into $\mathfrak{u}_{l}^{+}\oplus\mathfrak{\mathfrak{v}}_{1}\oplus\mathfrak{v}_{\tau}$
we can suppose that $\mu_{2}\left(v\right)=\alpha_{1}\mu_{1}\left(v\right)$
and $\mu_{3}\left(v\right)=\alpha_{2}\mu_{1}\left(v\right)$ for all
$v\in\mathfrak{w}_{1}$. Let $\alpha=\left(\alpha_{2}^{2}-\alpha_{1}^{2}\right)/2$,
consider the map 
\[
\eta_{2}:\begin{cases}
x_{i} & \rightarrow x_{i}\textrm{ for }i\neq l+1,l+\tau\textrm{ or }d\\
x_{l+1} & \rightarrow x_{l+1}+\alpha_{1}x_{l}\\
x_{l+\tau} & \rightarrow x_{l+\tau}+\alpha_{2}x_{l}\\
x_{d} & \rightarrow x_{d}-\alpha_{1}x_{l+1}+\alpha_{2}x_{l+\tau}+\alpha x_{l}.
\end{cases}
\]
Note that $\eta_{2}\in G_{Q_{i_{1},i_{2},i_{3}}}$. By writing elements
of $\mathfrak{w}_{1}$ in the form of linear maps and doing the composition,
one can check that $\eta_{2}^{-1}\mathfrak{w}_{1}\eta_{2}\subseteq\mathfrak{u}_{l}^{+}\oplus\mathfrak{u}_{l}^{-}$.
In a similar manner, one can also check that $\eta_{2}u=u\eta_{2}=u$
for all $u\in\mathfrak{u}^{-}\oplus\mathfrak{d}$. Let 
\[
\mathfrak{c}^{*}=\left\{ \left[u_{1},u_{2}\right]:u_{1}\in\mathfrak{\mathfrak{u}}_{l}^{+},u_{2}\in\mathfrak{u}_{l}^{-}\right\} \cong\mathbb{R}
\]
and note that $\mathfrak{c}^{*}=\left\langle b_{ll}-b_{dd}\right\rangle $,
where $b_{ij}$ denotes the matrix with $1$ in its $i^{\textrm{th}}$
row and $j^{\textrm{th}}$ column and 0 everywhere else. Consider
the map 
\[
\eta_{3}:\begin{cases}
x_{i}\rightarrow x_{i} & \textrm{for }1\leq i<l\textrm{ and }\tau+l<i\leq\sigma+\tau+l\\
x_{l}\rightarrow\frac{1}{\sqrt{2}}\left(x_{\tau+l}-x_{l+\tau+\sigma+1}\right)\\
x_{i}\rightarrow x_{i-1} & \textrm{for }l<i\leq\tau+l\\
x_{i}\rightarrow x_{i+1} & \textrm{for }\sigma+\tau+l<i<d\\
x_{d}\rightarrow\frac{1}{\sqrt{2}}\left(x_{\tau+l}+x_{l+\tau+\sigma+1}\right).
\end{cases}
\]
It is easy to verify that $Q_{i_{1},i_{2},i_{3}}\left(\eta_{3}x\right)=Q_{i_{1},i_{2},i_{3}+1}\left(x\right)$
for all $x\in\mathbb{R}^{d}$. Note that by comparing dimensions we
have 
\[
\mathfrak{u}_{l}^{-}\oplus\mathfrak{d}\oplus\mathfrak{c}^{*}\oplus\mathfrak{u}_{l}^{+}\cong\mathfrak{so}\left(Q_{i_{1},i_{2},i_{3}}|_{\left\langle x_{l},x_{l+\tau+1},\dots,x_{l+\tau+\sigma},x_{d}\right\rangle }\right)
\]
where the isomorphism corresponds to the embedding of $\left\langle x_{l},x_{l+\tau+1},\dots,x_{l+\tau+\sigma},x_{d}\right\rangle $
into $\mathbb{R}^{d}$. Thus 
\begin{alignat*}{1}
\eta_{3}^{-1}\left(\mathfrak{u}_{l}^{-}\oplus\mathfrak{d}\oplus\mathfrak{c}^{*}\oplus\mathfrak{u}_{l}^{+}\right)\eta_{3} & =\mathfrak{d}\left(i_{1},i_{2},i_{3}+1\right).
\end{alignat*}
By construction $\mathfrak{u}^{-}\oplus\mathfrak{d}\oplus\mathfrak{w}\subseteq\mathfrak{f}$
and hence 
\[
\mathfrak{u}_{l}^{-}\oplus\mathfrak{d}\oplus\mathfrak{c}^{*}\oplus\mathfrak{u}_{l}^{+}\subseteq\eta_{2}^{-1}\eta_{1}^{-1}\mathfrak{f}\eta_{1}\eta_{2},
\]
which together with the above formula, implies that 
\[
\mathfrak{d}\left(i_{1},i_{2},i_{3}+1\right)\subseteq\eta_{3}^{-1}\eta_{2}^{-1}\eta_{1}^{-1}\mathfrak{f}\eta_{1}\eta_{2}\eta_{3}.
\]
It is possible to check that $\eta_{3}^{-1}\left(\mathfrak{u}_{l}^{-}\oplus\dots\oplus\mathfrak{u}_{l-1}^{-}\right)\eta_{3}\subseteq\mathfrak{u}^{-}\left(i_{1},i_{2},i_{3}+1\right)$
and thus 
\begin{alignat*}{1}
\left\{ \left[u,d\right]:u\in\eta_{3}^{-1}\left(\mathfrak{u}_{1}^{-}\oplus\dots\oplus\mathfrak{u}_{l-1}^{-}\right)\eta_{3},d\in\mathfrak{d}\left(i_{1},i_{2},i_{3}+1\right)\right\}  & =\mathfrak{u}^{-}\left(i_{1},i_{2},i_{3}+1\right)\\
 & \subseteq\eta_{3}^{-1}\eta_{3}^{-1}\eta_{2}^{-1}\eta_{1}^{-1}\mathfrak{f}\eta_{1}\eta_{2}\eta_{3}.
\end{alignat*}
Moreover 
\begin{alignat*}{1}
\left\{ \left[u,v\right]:u\in\mathfrak{u}^{-}\left(i_{1},i_{2},i_{3}+1\right),v\in\mathfrak{u}^{-}\left(i_{1},i_{2},i_{3}+1\right)\right\}  & =\mathfrak{b}^{-}\left(i_{1},i_{2},i_{3}+1\right)\\
 & \subseteq\eta_{3}^{-1}\eta_{3}^{-1}\eta_{2}^{-1}\eta_{1}^{-1}\mathfrak{f}\eta_{1}\eta_{2}\eta_{3}.
\end{alignat*}
Therefore, $\left(UD\right){}_{i_{1},i_{2},i_{3}+1}\leq\eta_{3}^{-1}\eta_{2}^{-1}\eta_{1}^{-1}F\eta_{1}\eta_{2}\eta_{3}\leq G_{Q_{i_{1},i_{2},i_{3+1}}}$,
which is the third conclusion of the Lemma. Note that if $i_{3}=m$
it is impossible that there exists $v\in\mathfrak{w}_{1}$ so that
$\mu_{1}\left(v\right)\neq0$ and this case cannot occur if $i_{3}=m$. 
\item For all $v\in\mathfrak{w}_{1}$, $\mu_{1}\left(v\right)=0$ and either,
$\mu_{2}\left(v\right)=0$ for all $v\in\mathfrak{w}_{1}$ or $\mu_{3}\left(v\right)=0$
for all $v\in\mathfrak{w}_{1}$. In other words, either $\mathfrak{w}_{1}=\mathfrak{v}_{1}$
or $\mathfrak{w}_{1}=\mathfrak{v}_{\tau}$. Both cases can be treated
in an identical fashion, so suppose that $\mathfrak{w}_{1}=\mathfrak{v}_{1}$.
Moreover, suppose that $\tau_{1}>0$ or else, by possibly modifying
$a_{2}\in O\left(\tau_{1},\tau_{2}\right)$, we could suppose that
$\mathfrak{w}_{1}=\mathfrak{v}_{\tau}$ and $\tau_{2}>0$ and the
arguments would be the same up to minor modifications of the map involved.
Consider the map 
\[
\eta_{4}:\begin{array}{l}
\begin{cases}
x_{l+1} & \rightarrow x_{l+\tau}\\
x_{i} & \rightarrow x_{i}\quad\quad\textrm{ for }1\leq i\leq l\textrm{ or }l+\tau<i\leq d\\
x_{i} & \rightarrow x_{i+1}\quad\textrm{ for }l+1<i\leq l+\tau
\end{cases}\end{array}
\]
with the properties that $Q_{i_{1},i_{2},i_{3}}\left(\eta_{4}x\right)=Q_{i_{1}+1,i_{2},i_{3}}\left(x\right)$
for all $x\in\mathbb{R}^{d}$. By comparing dimensions 
\[
\mathfrak{\mathfrak{d}}\oplus\mathfrak{v}_{1}\cong\mathfrak{so}\left(Q_{i_{1},i_{2},i_{3}}|_{\left\langle x_{l+1},x_{l+\tau+1},\dots,x_{l+\tau+\sigma}\right\rangle }\right)
\]
where the isomorphism corresponds to the embedding of $\left\langle x_{l+1},x_{l+\tau+1},\dots,x_{l+\tau+\sigma}\right\rangle $
into $\mathbb{R}^{d}$. Thus 
\begin{alignat*}{1}
\eta_{4}^{-1}\left(\mathfrak{\mathfrak{d}}\oplus\mathfrak{v}_{1}\right)\eta_{4}=\mathfrak{d}\left(i_{1}+1,i_{2},i_{3}\right) & \subseteq\eta_{4}^{-1}\eta_{1}^{-1}\mathfrak{f}\eta_{1}\eta_{4}.
\end{alignat*}
It is possible to check that $\eta_{4}^{-1}\mathfrak{u}^{-}\eta_{4}\subset\mathfrak{u}^{-}\left(i_{1}+1,i_{2},i_{3}\right)$
and 
\begin{alignat*}{1}
\left\{ \left[u,d\right]:u\in\eta_{4}^{-1}\mathfrak{u}^{-}\eta_{4},d\in\mathfrak{d}\left(i_{1}+1,i_{2},i_{3}\right)\right\}  & =\mathfrak{u}^{-}\left(i_{1}+1,i_{2},i_{3}\right)\\
 & \subseteq\eta_{4}^{-1}\eta_{1}^{-1}\mathfrak{f}\eta_{1}\eta_{4}.
\end{alignat*}
Moreover 
\[
\mathfrak{b}^{-}\left(i_{1},i_{2},i_{3}\right)=\mathfrak{b}^{-}\left(i_{1}+1,i_{2},i_{3}\right)\subseteq\eta_{4}^{-1}\eta_{1}^{-1}\mathfrak{f}\eta_{1}\eta_{4}.
\]
Therefore, provided that $\tau_{1}>0$ we have $\left(UD\right){}_{i_{1}+1,i_{2},m}\leq\eta_{4}^{-1}\eta_{1}^{-1}F\eta_{1}\eta_{4}\leq G_{Q_{i_{1}+1,i_{2},m}}$,
which is the first conclusion of the Lemma. If the argument was repeated
for the opposite case, when $\mathfrak{w}_{1}=\mathfrak{v}_{\tau}$
and $\tau_{2}>0$, we would obtain $\left(UD\right){}_{i_{1},i_{2}+1,m}\leq\eta_{4}^{-1}\eta_{1}^{-1}F\eta_{1}\eta_{4}\leq G_{Q_{i_{1},i_{2}+1,m}}$,
which is the second conclusion of the Lemma. Note that if $i_{1}=p'$
then $\tau_{1}=0$ and hence the second conclusion can occur but not
the first. Similarly, if $i_{2}=q'$ then $\tau_{2}=0$ and $\tau_{1}>0$
and the first conclusion can occur but not the second.
\item For all $v\in\mathfrak{w}_{1}$, $\mu_{1}\left(v\right)=0$ and there
exists $c\in\mathbb{R}\setminus\left\{ 0\right\} $ such that $\mu_{2}\left(v\right)=c\mu_{3}\left(v\right)$
for all $v\in\mathfrak{w}_{1}$. Note that $O\left(\tau_{1},\tau_{2}\right)$
acts transitively on the hyperbola $x_{l+1}^{2}-x_{l+\tau}^{2}=c^{2}-1$.
If $c^{2}\neq1$, the hyperbola contains a vector such that the projection
onto the second co-ordinate is 0, hence we can modify $a_{2}$ so
that $\mathfrak{w}_{1}\subseteq\mathfrak{v}_{1}$ and then repeat
the arguments used for case 2. Therefore, we can assume that $c^{2}=1$.
Moreover, because $O\left(\tau_{1},\tau_{2}\right)$ contains the
transformations $x_{l+1}\rightarrow-x_{l+1}$ we may suppose that
$c=-1$. Consider the map 
\[
\eta_{5}:\begin{cases}
x_{l+1} & \rightarrow\frac{1}{\sqrt{2}}\left(x_{l+1}+x_{d}\right)\\
x_{l+\tau} & \rightarrow\frac{1}{\sqrt{2}}\left(x_{l+1}-x_{d}\right)\\
x_{i} & \rightarrow x_{i}\textrm{ for }1\leq i<l+1\textrm{ or }l+1<i<l+\tau\\
x_{i} & \rightarrow x_{i-1}\textrm{ for }l+\tau<i\leq d
\end{cases}
\]
with the property that $Q_{i_{1},i_{2},i_{3}}\left(\eta_{5}x\right)=Q_{i_{1}+1,i_{2}+1,i_{3}-1}\left(x\right)$
for all $x\in\mathbb{R}^{d}$. Since $\mathfrak{f}\subset\mathfrak{so}\left(Q_{i_{1},i_{2},i_{3}}\right)$
and by construction $\mathfrak{w}\oplus\mathfrak{\mathfrak{u}^{-}\oplus\mathfrak{d}}\subseteq\mathfrak{f}$,
the fact that $\eta_{1}\in G_{Q_{i_{1},i_{2},i_{3}}}$, means that
\begin{alignat*}{1}
\eta_{5}^{-1}\eta_{1}^{-1}\left(\mathfrak{w}\oplus\mathfrak{u}^{-}\oplus\mathfrak{d}\right)\eta_{1}\eta_{5} & =\left\{ \left(\begin{array}{ccccc}
0 & 0 & 0 & 0 & 0\\
0 & 0 & 0 & 0 & 0\\
-I_{\sigma_{1},\sigma_{2}}u^{T} & -I_{\sigma_{1},\sigma_{2}}v^{T} & 0 & d & 0\\
0 & 0 & 0 & u & 0\\
0 & 0 & 0 & v & 0
\end{array}\right):\begin{array}{l}
d\in\mathfrak{so}\left(I_{\sigma_{1},\sigma_{2}}\right)\\
v\in\mathbb{R}^{\sigma}\\
u\in\mbox{Mat}_{l,\sigma}\left(\mathbb{R}\right)
\end{array}\right\} \\
 & =\mathfrak{u}^{-}\left(i_{1}+1,i_{2}+1,i_{3}-1\right)\oplus\mathfrak{d}\left(i_{1}+1,i_{2}+1,i_{3}-1\right)\\
 & \subseteq\eta_{5}^{-1}\eta_{1}^{-1}\mathfrak{f}\eta_{1}\eta_{5}\\
 & \subseteq\mathfrak{so}\left(Q_{i_{1}+1,i_{2}+1,i_{3}-1}\right).
\end{alignat*}
The matrix in the previous calculation can be explained as follows.
There is a decomposition of $\mathfrak{so}\left(Q_{i_{1}+1,i_{2}+1,i_{3}-1}\right)$
into subspaces as carried out at the start of the proof. The bottom
two rows and the left most two columns form $\mathfrak{u}^{-}\left(i_{1}+1,i_{2}+1,i_{3}-1\right)$
and should be considered as one row and one column respectively in
the decomposition. The rest of the blocks are the sizes that would
be obtained from the decomposition. Moreover 
\begin{alignat*}{1}
\left\{ \left[u,v\right]:u\in\mathfrak{u}^{-}\left(i_{1}+1,i_{2}+1,i_{3}-1\right),v\in\mathfrak{u}^{-}\left(i_{1}+1,i_{2}+1,i_{3}-1\right)\right\}  & =\mathfrak{b}^{-}\left(i_{1}+1,i_{2}+1,i_{3}-1\right)\\
 & \subseteq\eta_{5}^{-1}\eta_{1}^{-1}\mathfrak{f}\eta_{1}\eta_{5}.
\end{alignat*}
Therefore, \foreignlanguage{english}{$\left(UD\right){}_{i_{1}+1,i_{2}+1,i_{3}-1}\leq\eta_{5}^{-1}\eta_{1}^{-1}F\eta_{1}\eta_{5}\leq G_{Q_{i_{1}+1,i_{2}+1,i_{3}-1}}$},
which is the fourth conclusion of the Lemma. Note that if $i_{1}=p'$
or $i_{2}=q'$ then this case will not arise because either $\tau_{1}$
or $\tau_{2}$ will be zero. Similarly, if $i_{3}=-\min\left\{ p',q'\right\} ,$
then either $i_{1}=p'$ or $i_{2}=q'$, hence this case cannot arise
if $i_{3}=-\min\left\{ p',q'\right\} $.
\end{caseenv}
Since the three cases considered above are mutually exclusive and
there are no other possibilities, this completes the proof the Lemma.
\end{proof}
We can now complete the proof of Proposition \foreignlanguage{english}{\prettyref{cor:hopefully teh eend}.}
\begin{proof}[Proof of Proposition \foreignlanguage{english}{\prettyref{cor:hopefully teh eend}}]
Let $F$ be as in the statement of the Proposition, then its clear
that $H_{0}^{*}\leq g_{d}Fg_{d}^{-1}\leq G_{0}$. Recall from Lemma
\ref{canonical} that 
\[
Q_{0}\left(x\right)=Q_{m+1,\ldots,s}\left(x\right)+2\sum_{i=1}^{m}x_{i}x_{s+r+n+i}+\sum_{i=s+1}^{s+r}x_{i}^{2}-\sum_{i=s+r+1}^{s+r+n}x_{i}^{2}.
\]
There exists $\eta_{0}\in GL_{d}\left(\mathbb{R}\right)$ such that
$\eta_{0}:x_{i}\rightarrow x_{i}$ for $1\leq i\leq m$ and $s+1\leq i\leq d$
and $Q_{m+1,\ldots,s}\left(\eta_{0}x\right)=\sum_{i=m+1}^{p'}x_{i}^{2}-\sum_{i=m+p'+1}^{p'+q'}x_{i}^{2}$,
therefore $\eta_{0}G_{0}\eta_{0}^{-1}=G_{Q_{0,0,0}}$ and $\eta_{0}H_{0}^{*}\eta_{0}^{-1}=H_{0}^{*}$
and hence $H_{0}^{*}\leq\eta_{0}g_{d}Fg_{d}^{-1}\eta_{0}^{-1}\leq G_{Q_{0,0,0}}$.
Since $H_{0}^{*}=\left(UD\right){}_{0,0,0}$ and by the assumptions
on $F$, $\eta_{0}g_{d}Fg_{d}^{-1}\eta_{0}^{-1}$ has no non trivial
invariant subspaces of dimension less than $d-s$, we can apply Lemma
\ref{lem:f=00003Dso(p,q) general} to get that there exists $\eta\in GL_{d}\left(\mathbb{R}\right)$
such that either:
\begin{enumerate}
\item $\left(UD\right){}_{1,0,0}\leq\eta\eta_{0}g_{d}Fg_{d}^{-1}\eta_{0}^{-1}\eta^{-1}\leq G_{Q_{1,0,0}}$,
\item $\left(UD\right){}_{0,1,0}\leq\eta\eta_{0}g_{d}Fg_{d}^{-1}\eta_{0}^{-1}\eta^{-1}\leq G_{Q_{0,1,0}}$,
\item $\left(UD\right){}_{0,0,1}\leq\eta\eta_{0}g_{d}Fg_{d}^{-1}\eta_{0}^{-1}\eta^{-1}\leq G_{Q_{0,0,1}}$,
\selectlanguage{english}%
\item $\left(UD\right){}_{1,1,-1}\leq\eta\eta_{0}g_{d}Fg_{d}^{-1}\eta_{0}^{-1}\eta^{-1}\leq G_{Q_{1,1,-1}}.$
\end{enumerate}
It is clear that $\eta\eta_{0}g_{d}Fg_{d}^{-1}\eta_{0}^{-1}\eta^{-1}$
has no non trivial invariant subspaces of dimension less than $d-s$.
Therefore, we can apply Lemma \ref{lem:f=00003Dso(p,q) general} repeatedly.
Since at each stage we always increase at least one of the indices
and we can only repeat the fourth case at most $\min\left\{ p',q'\right\} $
times, its clear that after repeating the process a finite number
of times we will obtain the inclusion $SO\left(p,q\right)\leq gg_{d}Fg_{d}^{-1}g^{-1}\leq G_{Q_{p',q',m}}$
for some $g\in GL_{d}\left(\mathbb{R}\right)$ and this implies that
$F=G_{Q}.$
\end{proof}
Finally, we are ready to prove Theorem \ref{conj:multilinear}. 
\begin{proof}[\selectlanguage{english}%
Proof of Theorem 1.2\selectlanguage{british}%
]
Rewrite $\overline{M\left(X_{\mathbb{Z}}\right)}=\overline{\left\{ M\left(x\right):x\in X_{\mathbb{Z}}\right\} }$.
Since $M$ is $H_{Q,M}^{*}$ invariant, we see 
\[
\overline{\left\{ M\left(x\right):x\in X_{\mathbb{Z}}\right\} }=\overline{\left\{ M\left(H_{Q,M}^{*}x\right):x\in X_{\mathbb{Z}}\right\} }.
\]
Now, since $X_{\mathbb{Z}}$ is $\Gamma_{Q}$ invariant 
\[
\overline{\left\{ M\left(H_{Q,M}^{*}x\right):x\in X_{\mathbb{Z}}\right\} }\supseteq\overline{\left\{ M\left(H_{Q,M}^{*}\Gamma_{Q}x\right):x\in X_{\mathbb{Z}}\right\} }.
\]
By Ratner's Theorem (Theorem \ref{thm:(Ranghunthans-togological-conjec})
we have 
\[
\overline{\left\{ M\left(H_{Q,M}^{*}\Gamma_{Q}x\right):x\in X_{\mathbb{Z}}\right\} }\supseteq\left\{ M\left(Fx\right):x\in X_{\mathbb{Z}}\right\} 
\]
for some closed connected subgroup $F$, such that $H_{Q,M}^{*}\leq F\leq G_{Q}$.
Proposition 3.2 in \cite{MR1092178} says that $F$ is the connected
component containing the identity of the real points of an algebraic
group defined over $\mathbb{Q}$. In particular, by Theorem 7.7 of
\cite{MR1278263} this implies $\overline{F\cap SL_{d}\left(\mathbb{Q}\right)}=F$
and so we can apply Lemma \ref{lem:no invariant subspaces} to see
that $F$ has no invariant subspaces of dimension less than $d-s$,
hence we can apply Proposition \ref{cor:hopefully teh eend} to get
that $F=G_{Q}$. Since $G_{Q}$, being the identity component of $SO\left(p,q\right)$,
acts transitively on connected components of $X_{\mathbb{R}}$ we
have $\left\{ M\left(G_{Q}x\right):x\in X_{\mathbb{Z}}\right\} =\left\{ M\left(x\right):x\in X_{\mathbb{R}}\right\} $
since if $X_{\mathbb{R}}$ is not connected, then if $x\in X_{\mathbb{Z}}$,
we have $-x\in X_{\mathbb{Z}}$ and $x$ and $-x$ lie in the two
separate components of $X_{\mathbb{R}}$. As remarked in the introduction,
the fact that $Q|_{\textrm{Ker}\left(M\right)}$ is indefinite implies
that $X_{\mathbb{R}}\cap\left\{ x\in\mathbb{R}^{d}:M\left(x\right)=b\right\} $
is non compact for every $b\in\mathbb{R}^{s}$, in particular this
implies that $X_{\mathbb{R}}\cap\left\{ x\in\mathbb{R}^{d}:M\left(x\right)=b\right\} $
is non empty for every $b\in\mathbb{R}^{s}$ or, in other words, that
$\left\{ M\left(x\right):x\in X_{\mathbb{R}}\right\} =\mathbb{R}^{s}$
and so we are done.
\end{proof}

\section*{Acknowledgements}

I would like to thank my supervisor, Alex Gorodnik, for suggesting
the problem to me and for many helpful discussions and suggestions,
generally ensuring accuracy of the stated results. I would also like
to thank the referee who reviewed an earlier version of this paper
for pointing out many inaccuracies and providing a clear and comprehensive
report. 

\bibliographystyle{amsalpha}
\bibliography{References}

\end{document}